\newtheorem{defi}{Definition}[section]
\newtheorem{thm}{Theorem}[section]
\newtheorem{lem}{Lemma}[section]
\newtheorem{cor}{Corollary}[section]
\newcommand{\R}{\mathbb{R}}
\newcommand{\C}{\mathbb{C}}
\def\Xint#1{\mathchoice
{\XXint\displaystyle\textstyle{#1}}
{\XXint\textstyle\scriptstyle{#1}}
{\XXint\scriptstyle\scriptscriptstyle{#1}}
{\XXint\scriptscriptstyle\scriptscriptstyle{#1}}
\!\int}
\def\XXint#1#2#3{{\setbox0=\hbox{$#1{#2#3}{\int}$ }
\vcenter{\hbox{$#2#3$ }}\kern-.6\wd0}}
\def\dashint{\Xint-}
  \let\div\relax
  \DeclareMathOperator{\div}{div}
\newcommand{\MSonehalfspacing}{%
  \setstretch{1.44}
  \ifcase \@ptsize \relax 
    \setstretch {1.448}%
  \or 
    \setstretch {1.399}%
  \or 
    \setstretch {1.433}%
  \fi
}
\newcommand{\MSdoublespacing}{%
  \setstretch {1.92}
  \ifcase \@ptsize \relax 
    \setstretch {1.936}%
  \or 
    \setstretch {1.866}%
  \or 
    \setstretch {1.902}%
  \fi
}
\begin{document}

	\title{Distributional Fractional Gradients and a Bourgain-Brezis-type Estimate}
	
	\author{Jerome Wettstein\footnote{Department of Mathematical Sciences, Florida Institute of Technology, 32901 Melbourne, USA}}
\maketitle
\date{ }
\begin{abstract}
In this paper, we extend the definition of fractional gradients found in Mazowiecka-Schikorra \cite{schikorra} to tempered distributions on $\R^n$, introduce associated regularisation procedures and establish some first regularity results for distributional fractional gradients in $L^{1}_{od}$. The key feature is the introduction of a suitable space of off-diagonal Schwarz functions $\mathcal{S}_{od}(\R^{2n})$, allowing for a dual definition of the fractional gradient on an appropriate space of distributions $\mathcal{S}^{\prime}_{od}(\R^{2n})$ by means of fractional divergences defined on $\mathcal{S}_{od}(\R^{2n})$. In the course of the paper, we make a first attempt to define Sobolev spaces with negative exponents in this framework and derive a result reminiscent of Bourgain-Brezis \cite{bbdivY} and Da Lio-Rivi\`ere-Wettstein \cite{wett} in the form of a fractional Bourgain-Brezis inequality for this kind of gradient.
\end{abstract}
\medskip

\tableofcontents

	\section{Introduction}
	\label{introduction}

	In the paper Bourgain-Brezis \cite{bbdivY}, Bourgain and Brezis investigate the solvability of divergence equations $\div Y = f$ for critical Sobolev exponents and deduce an improved regularity property for "special" solutions in a general context. In fact, they prove that boundedness may be assumed for at least one solution, improving regularity in a way that cannot be derived from the well-known Sobolev embeddings, which allow for a general $BMO$-estimate at best. The argument to do so in dimension $2$ involves a duality argument employing an estimate for gradients in $L^{1} + H^{-1}$. Similar inequalities were later obtained in Da Lio-Rivi\`ere-Wettstein \cite{wett} to establish inequalities which culminated in a boundary value characterisation of Bergman spaces reminiscent of the classical analogue for Hardy spaces. It should be noted that the operators studied in the paper Da Lio-Rivi\`ere-Wettstein \cite{wett} are closely related to fractional gradients as  introduced in Shieh-Spector \cite{shiehspec}, if one uses Riesz potentials in order to move the fractional Laplacians from the right to the left handside of the inequalities found in \cite{wett}. Meanwhile, in the paper Mazowiecka-Schikorra \cite{schikorra}, Mazowiecka and Schikorra introduced the intriguing and very natural notion of fractional gradient and fractional divergence. These are closely connected to the Gagliardo-Sobolev norms and distinct from the one in Shieh-Spector \cite{shiehspec}. The fractional gradient in Mazowiecka-Schikorra \cite{schikorra} is based on difference quotients and thus depends on more variables than the initial function, while the one in Shieh-Spector \cite{shiehspec} is based on certain integral operators and thus does not include dependence on more variables than the initial function. These ideas are then used in Da Lio-Mazowiecka-Schikorra \cite{gaugeriviere} and Mazowiecka-Schikorra \cite{schikorra} to study non-local Wente-type estimates for fractional div-curl quantities and extending Rivi\`ere's celebrated gauge construction from the regularity theory of harmonic maps to the realm of fractional harmonic maps, further emphasising the substantial similarities between these cases.\\
	
	In the current paper, we contribute to the understanding of Mazowiecka and Schikorra's fractional gradients and divergences by forumlating a distributional setup adapted to the study of fractional gradients in a general setting. We show that we may define the fractional gradient by means of duality, as the fractional divergence defines a continuous operator on the test function side. Moreover, we introduce regularisation techniques which bridge the gap between functions of $n$ variables and functions of $2n$ variables, a key step in establishing smooth approximations that are natural both for functions of $n$ as well as of $2n$ variables. Later, we exploit this setup to prove a first regularity result for $L^{1}_{od}$-fractional gradients and attempt to generalise the notion of Sobolev spaces with negative exponent to this situation, in order to obtain a Bourgain-Brezis-type inequality similar to those in Bourgain-Brezis \cite{bbdivY} and Da Lio-Rivi\`ere-Wettstein \cite{wett}. We succeed with a natural space $H^{-1/2}_{od}$, however, there are other definitions possible.\\
	
	The general outline of the paper is as follows: First, in Section \ref{prelim}, we recall some of the most important properties and notions associated with tempered distributions. In Section \ref{offdiagdist}, we introduce the previously mentioned class of off-diagonal Schwartz functions and study some of their key properties, such as approximations and connections to the fractional gradients in Mazowiecka-Schikorra \cite{schikorra} as well as the fractional Laplacians. Afterwards, in Section \ref{bbest}, we study a version of the Bourgain-Brezis estimates from Bourgain-Brezis \cite{bbdivY} in analogy to Da Lio-Rivi\`ere-Wettstein \cite{wett}. Thus, we also discuss natural ways to introduce $H^{-1/2}_{od}$ as a subset of the space of off-diagonal distributions. During the course of our endeavours, we shall extend the Sobolev embeddings to distributions with fractional gradients satisfying the natural integrability properties. Finally, we very briefly extend the remaining Sobolev embeddings in Section \ref{sobolevemb} to the framework of fractional gradients.	
	
	\section{Preliminaries}
	\label{prelim}
	
	Before we enter our discussion, let us briefly recall some of the most relevant objects to our later considerations.
	
	\subsection{Schwartz Distributions}
	
	Throughout this paper, we shall denote by $\R^n$ the space of points $(x_1, \ldots, x_n)$ where $x_1, \ldots x_n \in \R$. On occasion, we shall use the multi-index notation. Namely, we define for $\alpha \in \mathbb{N}_{0}^{n}$ with $\alpha = (\alpha_1, \ldots, \alpha_n)$ the following quantities:
	\begin{equation}
		x^{\alpha} := \prod_{j=1}^{n} x_{j}^{\alpha_j}, \quad \partial^{\alpha} f(x) = \partial_{x_1}^{\alpha_1} \ldots \partial_{x_n}^{\alpha_n} f(x)
	\end{equation}
	Moreover, we shall sometimes use:
	$$| \alpha | = \sum_{j=1}^{n} \alpha_j$$
	Using these definitions, we are able to introduce the Schwartz semi-norms: For any $\alpha, \beta \in \mathbb{N}_{0}^{n}$ and smooth function $f: \R^n \to \C$, we set:
	$$p_{\alpha, \beta}(f) := \sup_{x \in \R^n} \big{|} x^{\alpha} \partial^{\beta} f(x) \big{|}$$
	Resulting from these, we have:
	$$\forall n \geq 0: p_{n}(f) := \sup_{| \alpha |, | \beta | \leq n} p_{\alpha, \beta}(f)$$
	It is straightforward to see that these define seminorms. The \emph{Schwartz functions} are now defined to be the following subset of all smooth functions:
	\begin{equation}
		\mathcal{S}(\R^{n}) := \Big{\{} f \in C^{\infty}(\R^n)\ \big{|}\ \forall n \geq 0: p_{n}(f) < +\infty \Big{\}}
	\end{equation}
	The finiteness of all seminorms implies that any Schwartz function decreases faster than any polynomial grows. This is the reason why these functions are sometimes also called \emph{rapidly decreasing}. The space $\mathcal{S}(\R^n)$ can also be equipped with a topology stemming from the metric:
	\begin{equation}
		d(f,g) := \sum_{j=0}^{+\infty} 2^{-j} \min \{ 1, p_{j}(f-g) \}
	\end{equation}
	This renders $\mathcal{S}(\R^n)$ a \emph{Fr\'echet space}, thus ensuring that a variety of good properties carry over to this locally convex topological vectorspace (such as a variant of Banach-Steinhaus' Theorem). Naturally, as is well-known, the Fourier transform $f \mapsto \hat{f}$ defines an isomorphism of $\mathcal{S}(\R^n)$ onto itself and we define the following operation for later use:
	\begin{equation}
		f^{\#}(x) := f(-x), \quad \forall f \in \mathcal{S}(\R^n).
	\end{equation}
	Naturally, we have $f^{\#} \in \mathcal{S}(\R^n)$ as well.\\
	
	Having a topology on $\mathcal{S}(\R^n)$ available enables us to define its topological dual space: The space of \emph{tempered distributions} $\mathcal{S}^{\prime}(\R^n)$ is precisely the collection of all continuous linear maps $T: \mathcal{S}(\R^n) \to \C$. By duality, extending the identities available for the regular distributions $\mathcal{S}(\R^n) \subset \mathcal{S}^{\prime}(\R^n)$, one may also define the Fourier transform and the $\#$-operator on distributions:
	\begin{equation}
		\langle \hat{T}, f \rangle := \langle T, \hat{f} \rangle, \quad \langle T^{\#}, f \rangle := \langle T, f^{\#} \rangle,
	\end{equation}
	leading to distributions $\hat{T}, T^{\#} \in \mathcal{S}^{\prime}(\R^n)$, provided $T$ belongs to this space.

	\section{Off-Diagonal Distributions}
	\label{offdiagdist}
	
	In this section, we shall introduce the notion of off-diagonal Schwartz functions and off-diagonal distributions and present the, for our purposes in the current note, most frequently used operations on these sets like fractional gradients, fractional divergences and convolutions. In a second step, we shall investigate the relationship between the pointwise fractional gradient as seen in Mazowiecka-Schikorra \cite{schikorra} and the distributional fractional gradient we define here and deduce some natural characterisations for certain fractional Sobolev spaces in terms of off-diagonal distributions.
	
	\subsection{Definitions and some Operations}
	
	Let us start by introducing a suitable subspace of $\mathcal{S}(\R^{2n})$ that incorporates properties which are at the same time natural to impose and sufficient to later allow for the existence of fractional divergence operators in a pointwise sense as in Mazowicka-Schikorra \cite[p.6]{schikorra}:
	
	\begin{defi}
		We denote by $\mathcal{S}_{od}(\R^{2n})$ the set of \emph{off-diagonal Schwartz functions} on $\R^{2n}$ given by:
		\begin{equation}
		\label{odschwartz}
			\mathcal{S}_{od}(\R^{2n}) := \Big{\{} f \in \mathcal{S}(\R^{2n})\ \big{|} f(x,y) = - f(y,x) \text{ and } \forall \alpha, \beta \in \mathbb{N}^n: \partial_{x}^\alpha \partial_{y}^\beta f(z,z) = 0, \forall z \in \R^n \Big{\}}
		\end{equation}
		This space shall be considered as a subspace of $\mathcal{S}(\R^{2n})$ equipped with the subspace topology, i.e. the Fr\'echet topology induced by the Schwartz seminorms.
	\end{defi}
	
	Naturally, it is easy to see that $\mathcal{S}_{od}(\R^{2n})$ is even a closed subset of $\mathcal{S}(\R^{2n})$, therefore rendering $\mathcal{S}_{od}(\R^{2n})$ a Fr\'echet space as well. We denote its dual space by $\mathcal{S}^{\prime}_{od}(\R^{2n})$ and refer to this set as the set of \emph{off-diagonal tempered distributions}. Natural examples of functions in $\mathcal{S}_{od}(\R^{2n})$ include smooth, compactly supported functions outside of the diagonal which are anti-symmetrised appropriately. The proof that such functions belong to the set of off-diagonal Schwartz functions is trivial.\\
	
	Before we continue, let us introduce the following spaces in analogy to Mazowiecka-Schikorra \cite[p.6]{schikorra}:
	
	\begin{defi}
		For every $p \in [1,\infty[$, let us define the following space:
		$$L^{p}_{od}(\R^{2n}) := \Big{\{} F \in \mathcal{M}_{od}(\R^{2n})\ \big{|}\ \| F \|_{L^{p}_{od}} < \infty  \Big{\}},$$
		where $\mathcal{M}_{od}(\R^{2n})$ denotes the measurable off-diagonal maps and we used:
		$$\| F \|_{L^{p}_{od}}^{p} := \int_{\R^n} \int_{\R^n} | F(x,y) |^p \frac{dx dy}{|x-y|^n}$$
	\end{defi}
	
	We implicitly assume anti-symmetry of the function $F$ here (i.e. $F(x,y) = - F(y,x)$), as the even parts would always vanish when evaluated on $\mathcal{S}_{od}(\R^{2n})$ as an off-diagonal distribution in $\mathcal{S}^{\prime}_{od}(\R^{2n})$. Naturally, one may reasonably expect that a meaningful definition of off-diagonal distribution should include all $L^{p}_{od}$-spaces. Indeed, we may identify $F \in L^{p}_{od}(\R^{2n})$ with an off-diagonal tempered distribution in the following way:
	$$\langle F, G \rangle := \int_{\R} \int_{\R} F(x,y) G(x,y) \frac{dx dy}{| x-y |^n}, \quad \forall G \in \mathcal{S}_{od}(\R^{2n})$$
	Continuity of the distribution in the natural Fr\'echet topology becomes obvious using H\"older's inequality and the behaviour of $G$ along $x = y$ to deduce sufficient local integrability and a rapid decay of $G$. We emphasise that the vanishing derivatives of arbitrary order along the diagonal are helpful in this argument.\\
	
	For the remainder of this introductory section on $\mathcal{S}_{od}(\R^{2n})$ and its dual $\mathcal{S}^{\prime}_{od}(\R^{2n})$, we shall attempt to generalise two important operations for later applications: Smoothing by convolution and the fractional divergence as introduced in Mazowiecka-Schikorra \cite[p.6]{schikorra}. We shall start with the former, as the definition is very natural:
	
	\begin{lem}
	\label{convolution}
		Let $\varphi \in \mathcal{S}(\R^n)$ and $G \in \mathcal{S}_{od}(\R^{2n})$ be given. Let us define the following:
		\begin{equation}
		\label{defconv}
			\forall x,y \in \R^n: \varphi \ast G(x,y) := \int_{\R^n} \varphi(z) G(x-z, y-z) dz
		\end{equation}
		Then:
		$$\varphi \ast G \in \mathcal{S}_{od}(\R^{2n}),$$
		and in addition, the map $G \mapsto \varphi \ast G$ is continuous.
	\end{lem}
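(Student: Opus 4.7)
The plan is to verify in order: smoothness, rapid decay with bounds controlled by Schwartz seminorms of $G$ (which will simultaneously give continuity), anti-symmetry, and vanishing of derivatives on the diagonal.

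Smoothness is routine: since $\varphi \in \mathcal{S}(\R^n)$ is integrable and $G$ is smooth with all derivatives of polynomial decay, one may differentiate under the integral sign, yielding
\begin{equation*}
\partial_{x}^{\gamma_1}\partial_{y}^{\gamma_2}(\varphi\ast G)(x,y) = \int_{\R^n} \varphi(z)\,(\partial_{x}^{\gamma_1}\partial_{y}^{\gamma_2}G)(x-z,y-z)\,dz.
\end{equation*}
The heart of the matter is rapid decay. Here I would use the standard convolution trick: write $x = (x-z)+z$ and $y = (y-z)+z$ and expand $x^\alpha y^\beta$ by the multinomial formula to distribute powers between the translated arguments and $z$. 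This yields finitely many terms of the form
\begin{equation*}
\int_{\R^n} z^{\mu}\varphi(z)\,\bigl[(x-z)^{\alpha'}(y-z)^{\beta'}(\partial^{\gamma}G)(x-z,y-z)\bigr]\,dz.
\end{equation*}
The bracket is bounded by $p_{\alpha'+\beta',\gamma}(G)$ uniformly in $x,y,z$, while $\int |z^{\mu}\varphi(z)|\,dz \leq C p_{N}(\varphi)$ for $N$ large enough. Summing over all terms produced by the expansion gives an estimate $p_{\alpha,\beta,\gamma}(\varphi\ast G) \leq C(\varphi)\,p_{M}(G)$ for some $M$ depending on $\alpha,\beta,\gamma$, which simultaneously establishes $\varphi\ast G \in \mathcal{S}(\R^{2n})$ and the continuity of $G \mapsto \varphi\ast G$ in the Fr\'echet topology induced by the Schwartz seminorms.

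The two remaining off-diagonal properties are direct. Anti-symmetry follows from a change of variables: $(\varphi\ast G)(y,x) = \int \varphi(z)G(y-z,x-z)\,dz = -\int \varphi(z)G(x-z,y-z)\,dz = -(\varphi\ast G)(x,y)$, using the anti-symmetry of $G$. For vanishing derivatives along the diagonal, having moved the derivatives onto $G$ as displayed above, evaluating at $(w,w)$ gives
\begin{equation*}
\partial_{x}^{\gamma_1}\partial_{y}^{\gamma_2}(\varphi\ast G)(w,w) = \int_{\R^n}\varphi(z)\,(\partial_{x}^{\gamma_1}\partial_{y}^{\gamma_2}G)(w-z,w-z)\,dz = 0,
\end{equation*}
since the integrand vanishes identically by the defining property of $\mathcal{S}_{od}(\R^{2n})$.

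I do not expect any genuine obstacle: the only mildly delicate point is bookkeeping the multinomial expansion used for rapid decay, but this is a standard computation. The key observation making everything work is that convolution acts by the diagonal translation $(x,y) \mapsto (x-z,y-z)$, which preserves both the subset $\{x=y\}$ and the anti-symmetry under $(x,y) \leftrightarrow (y,x)$, so the off-diagonal structure is automatically inherited from $G$.
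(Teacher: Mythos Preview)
Your proof is correct. The smoothness, anti-symmetry, and diagonal-vanishing arguments are essentially identical to the paper's. The only genuine difference is in how you establish rapid decay: you use the standard multinomial expansion $x^{\alpha}y^{\beta} = ((x-z)+z)^{\alpha}((y-z)+z)^{\beta}$ to redistribute the polynomial weights between $G(x-z,y-z)$ and $\varphi(z)$, whereas the paper instead reduces to the case of no derivatives, assumes $|x|\le|y|$, and splits the integral into the regions $|z|\le|y|/2$ and $|z|>|y|/2$, using the decay of $G$ on the first piece and the decay of $\varphi$ on the second. Your approach is the more standard textbook argument and gives the continuity estimate in a single clean stroke; the paper's splitting argument is a bit more hands-on but equally valid. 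Both yield bounds depending only on Schwartz seminorms of $G$ (with constants depending on $\varphi$), which is all that is needed.
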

	
	  \begin{proof}
	  	Due to the definition in \eqref{defconv} and the rapid decay of both $\varphi$ and $G$, it is obvious that:
		$$\varphi \ast G \in C^{\infty}(\R^{2n}),$$
		and we may pull derivatives inside the integral. Next, let us consider the behaviour of the convolution along the diagonal. Thus, if $x = y \in \R^n$, we immediately see:
		\begin{align}
		\label{convolutionlemmavanishing}
			\varphi \ast G(x,x)	&= \int_{\R^n} \varphi(z) G(x-z, x-z) dz \notag \\
							&= \int_{\R^n} \varphi(z) \cdot 0 dz = 0,
		\end{align}
		where we used that $G$ vanishes along the diagonal, an immediate consequence of $G(x,x) = - G(x,x)$ thanks to anti-symmetry. Moreover, we also observe:
		\begin{align}
			\varphi \ast G(y,x)	&= \int_{\R^n} \varphi(z) G(y-z, x-z) dz \notag \\
							&= - \int_{\R^n} \varphi(z) G(x-z, y-z) dz \notag \\
							&= - \varphi \ast G (x,y)
		\end{align}
		So the convolution as defined in \eqref{defconv} satisfies the anti-symmetry property as required for an off-diagonal Schwartz function. In another step, we would like to verify that the partial derivatives vanish along the diagonal. To achieve this, let us observe that, as we are able to pull derivatives of \eqref{defconv} into the integral, we may notice:
		$$\partial_{x}^\alpha \partial_{y}^\beta \big{(} \varphi \ast G \big{)} (z,z) = \big{(} \varphi \ast \partial_{x}^\alpha \partial_{y}^\beta G \big{)} (z,z) = 0,$$
		where in the last equation we employed the same argument as in \eqref{convolutionlemmavanishing}. As $\alpha, \beta \in \mathbb{N}^n$ were arbitrary multi-indices, the statement follows.\\
		
		It remains to show that the convolution $\varphi \ast G$ has rapid decay. In fact, continuity of the convolution operator $G \mapsto \varphi \ast G$ will be an immediate corollary of our considerations here, as all estimates we will be using shall only depend on Schwartz seminorms. Notice that we have to prove that for any $\gamma, \delta, \alpha, \beta \in \mathbb{N}^n$:
		\begin{equation}
		\label{schwarzsup}
			\sup_{x,y \in \R^n} \big{|} x^\gamma y^\delta \partial_{x}^{\alpha} \partial_{y}^{\beta} \big{(} \varphi \ast G \big{)} (x,y) \big{|} < \infty
		\end{equation}
		One observes that it suffices to consider the case $\alpha = \beta = 0$, since any derivatives of $G$ are still Schwartz functions continuously depending on $G$, hence allowing an inductive argument to deal with general $\alpha$ and $\beta$. We may consider the case $| x | \leq | y |$, the other one may easily be treated in a completely analogous manner by anti-symmetry. Let us split the integral in the definition \eqref{defconv} as follows:
		$$\int_{\R^n} \varphi(z) G(x-z, y-z) dz = \int_{|z| \leq |y|/2} \varphi(z) G(x-z, y-z) dz + \int_{|z| > |y|/2} \varphi(z) G(x-z, y-z) dz$$
		For the second summand, we merely estimate $G$ by its supremum und use the following estimate:
		$$| \varphi(z) | \leq \frac{C_{N, \varphi}}{(1 + |z|)^N},$$
		due to $\varphi \in \mathcal{S}(\R^{n})$. Integration now shows that, as we may choose $N$ arbitrarily large (depending on $\gamma, \delta$ involving estimates of Schwartz seminorms), the contribution of this summand has bounded supremum in \eqref{schwarzsup}:
		\begin{align}
			\Big{|} x^{\gamma} y^{\delta} \int_{|z| > |y|/2} \varphi(z) G(x-z, y-z) dz \Big{|}	&\leq |y|^{| \gamma | + | \delta |} \int_{|z| > |y|/2} | \varphi(z) | \| G \|_{L^{\infty}} dz \notag \\
																		&\leq C_{N, \varphi} \| G \|_{L^{\infty}} | y |^{|\gamma| + | \delta |} \int_{|z| > |y|/2} \frac{1}{(1+|z|)^{N}} dz \notag \\
																		&\leq {C}_{N, n, \varphi} \| G \|_{L^{\infty}} | y |^{|\gamma| + | \delta |} \int_{|y|/2}^{+\infty} \frac{r^{n-1}}{(1+r)^{N}} dr \notag \\
																		&\leq 2^{|\gamma| + | \delta |} {C}_{N, n, \varphi} \| G \|_{L^{\infty}} \int_{0}^{+\infty} \frac{r^{n+|\gamma|+|\delta| -1}}{(1+r)^{N}} dr \notag \\
																		&\leq \tilde{C}_{N, n, \gamma, \delta, \varphi} \| G \|_{L^{\infty}} < +\infty.
		\end{align}
		Consequently, it remains to consider the first summand. In this case, we observe that due to the fact that $G$ is Schwartz:
		\begin{equation}
		\label{helferungleichungyz}
			| G(x,y) | \leq \frac{C_{N}}{(1+|x| + |y|)^N} \leq \frac{C_{N}}{(1+|y|)^N},
		\end{equation}
		thus again choosing $N$ sufficiently large, and observing that:
		\begin{equation}
		\label{helferungleichungyz2}
			\frac{1}{1+|y-z|} \leq \frac{1}{1 + | y | - | z |} \leq \frac{1}{1 + \frac{|y|}{2}} \lesssim \frac{1}{1 + |y|},
		\end{equation}
		for all $|z| \leq |y|/2$, we deduce that also the contribution of this summand to \eqref{schwarzsup} is bounded as well, using \eqref{helferungleichungyz}, \eqref{helferungleichungyz2}:
		\begin{align}
			\Big{|} x^{\gamma} y^{\delta} \int_{|z| \leq |y|/2} \varphi(z) G(x-z, y-z) dz \Big{|}	&\leq | y |^{|\gamma|+|\delta|} \int_{|z| \leq |y|/2} \| \varphi \|_{L^{\infty}} \frac{C_N}{(1+|y-z|)^{N}} dz \notag \\
																		&\leq \tilde{C}_{N} \| \varphi \|_{L^{\infty}} | y |^{| \gamma| + |\delta|} \int_{|z| \leq |y|/2} \frac{1}{(1+|y|)^{N}} dz \notag \\
																		&\leq \frac{\tilde{C}_{N}}{2^{n}} \| \varphi \|_{L^{\infty}} \frac{|y|^{n+|\gamma|+|\delta|}}{(1+|y|)^{N}} < + \infty.
		\end{align}
		Therefore, $\varphi \ast G$ is Schwartz and thus, we have proven the desired continuity result by explicitly using the Schwartz seminorms of $G$ in the estimates above.
	  \end{proof}
	
	It should be noted, that the definition of convolution given above naturally extends to convolutions with functions $F \in L^{1}_{od}(\R^{2n})$ and that one can easily verify using a direct computations that:
	\begin{equation}
	\label{l1young}
		\| \varphi \ast F \|_{L^{1}_{od}} \leq \| \varphi \|_{L^1} \cdot \| F \|_{L^{1}_{od}}
	\end{equation}
	In fact, one even obtains the following general inequality by using Minkowski's inequality for all $p \in [1,+\infty]$:
	\begin{equation}
	\label{generall1young}
		\| \varphi \ast F \|_{L^{p}_{od}} \leq \| \varphi \|_{L^1} \cdot \| F \|_{L^{p}_{od}}
	\end{equation}
	This generalisation will be of great interest later on when we will be regularising fractional gradients in an appropriate way.\\
	
	Next, we turn to the study of fractional gradients for tempered distributions. These considerations are based on the following property of the fractional divergence:
	
	\begin{lem}
	\label{fracdiv}
		Let $G \in \mathcal{S}_{od}(\R^{2n})$ be given. Then we have:
		$$\div_{1/2} G \in \mathcal{S}(\R^n),$$
		where we use the following definition for the $1/2$-divergence:
		\begin{equation}
		\label{definitionhalfdivergenceforschartz}
			\forall x \in \R^n: \div_{1/2} G(x) := \int_{\R^n} \frac{2G(x,y)}{|x-y|^{1/2}} \frac{dy}{| x-y |^n} = \int_{\R^n} \frac{G(x,y) - G(y,x)}{|x-y|^{1/2}} \frac{dy}{| x-y |^n}
		\end{equation}
		In addition, the map $G \mapsto \div_{1/2} G$ is continuous with respect to the Schwartz topology.
	\end{lem}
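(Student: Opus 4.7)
The plan is to first eliminate the moving singularity by substituting $z = y-x$, which rewrites the definition as
$$\div_{1/2} G(x) = \int_{\R^n} \frac{2 G(x, x+z)}{|z|^{n+1/2}} dz.$$
Now the singular factor $|z|^{-n-1/2}$ sits at the fixed point $z=0$ and does not move with $x$, so smoothness and decay estimates in $x$ reduce to controlling the integrand at each $x$ by means of fixed bounds in $z$.

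For each fixed $x$, absolute convergence of the integral follows by splitting into $|z| \leq 1$ and $|z| > 1$. The outer piece is harmless, since $G$ is Schwartz and $|z|^{-n-1/2}$ is integrable at infinity in $\R^n$. On the inner piece, the defining property $\partial_x^\alpha \partial_y^\beta G(w,w) = 0$ of $\mathcal{S}_{od}(\R^{2n})$ is essential: a Taylor expansion of $z \mapsto G(x, x+z)$ around $z=0$ yields $|G(x, x+z)| \leq C_k |z|^k$ for any $k \geq 1$, with $C_k$ a finite combination of Schwartz seminorms of $G$, and already $k=1$ defeats the singularity. Differentiation in $x$ under the integral is then justified by the same argument applied to $(\partial_x^\beta \partial_y^\gamma G)(x, x+z)$ for $\beta+\gamma = \alpha$, since these derivatives also vanish to infinite order on the diagonal and hence admit the same Taylor bound, yielding an integrable majorant uniform in $x$ on compact sets.

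Rapid decay of $\partial_x^\alpha \div_{1/2} G(x)$ in $x$ requires a sharpening of these estimates: on $|z| \leq |x|/2$ one has $|x+z| \geq |x|/2$, so combining the Taylor bound with the Schwartz decay in both variables gives $|(\partial_x^\beta \partial_y^\gamma G)(x, x+z)| \leq C_{N,k} (1+|x|)^{-N} |z|^k$ for arbitrary $N, k$, while on $|z| > |x|/2$ the Schwartz decay of $G$ alone suffices. Integration in $z$ then produces $(1+|x|)^{-N}$ decay for every $N$, so $\div_{1/2} G \in \mathcal{S}(\R^n)$. Since every constant appearing is controlled by a finite Schwartz seminorm of $G$, continuity of $G \mapsto \div_{1/2} G$ is an automatic byproduct. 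The main obstacle, and indeed the only genuine subtlety, is that $|z|^{-n-1/2}$ is not locally integrable on its own; the infinite-order vanishing of $G$ on the diagonal is the exact mechanism that compensates, and producing Taylor estimates that simultaneously provide local integrability in $z$ and Schwartz decay in $x$ is the technical heart of the argument.
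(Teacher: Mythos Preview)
Your proof is correct and follows essentially the same approach as the paper: the same change of variables to fix the singularity at $z=0$, the same use of the infinite-order diagonal vanishing via Taylor expansion to handle the local singularity, and the same splitting into $|z| \leq |x|/2$ and $|z| > |x|/2$ for the rapid decay, with continuity read off from the fact that all constants are Schwartz seminorms. The only cosmetic differences are your preliminary $|z|\leq 1$ versus $|z|>1$ split for absolute convergence (the paper proceeds more directly) and your explicit mention of the chain-rule terms $\partial_x^\beta\partial_y^\gamma G$ when differentiating under the integral.
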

	
	  \begin{proof}
	  	We notice that $\div_{1/2} G$ is well-defined in a pointwise sense due to all derivatives in $y$ direction vanishing up to arbitrary order at $y = x$ and Taylor approximation. To deduce smoothness, let us notice that by a change of variables:
		$$\int_{\R^n} \frac{G(x,y)}{|x-y|^{1/2}} \frac{dy}{| x-y |^n} = \int_{\R^n} \frac{G(x, x-z)}{|z|^{n + 1/2}} dz$$
		It is now obvious that by the properties of off-diagonal Schwartz functions, we may deduce as previously for the convolution in \eqref{defconv}:
		$$\div_{1/2} G \in C^{\infty}(\R^n),$$
		as we can pull the derivatives into the integral due to the integrability properties of $G$ and all its derivatives. One notices that the vanishing of all partial derivatives along the diagonal is strictly required here. This now provides retrospective motivation for the definition of the space $\mathcal{S}_{od}(\R^{2n})$.\\
		
		Thus, it remains to verify that $\div_{1/2} G$ is rapidly decaying. The estimates we will establish merely depend on the Schwartz seminorms of $G$ and thus also prove continuity by definition of the Schwartz topology. Once again, we would like to estimate:
		$$\sup_{x \in \R} \big{|} x^\gamma \partial_{x}^\alpha \div_{1/2} G (x) \big{|} < \infty,$$
		where $\gamma, \alpha \in \mathbb{N}^n$ are arbitrary. As in the proof of Lemma \ref{convolution}, it suffices to consider $\alpha = 0$, as the partial derivative can be pulled into the intergal, leading to estimates of $1/2$-divergences for some partial derivatives $G$. These would then yield the corresponding estimates with respect to $G$, so $\alpha = 0$ suffices. Again, let us split the intergal:
		$$\int_{\R^n} \frac{G(x, x-z)}{|z|^{n + 1/2}} dy = \int_{|z| \leq |x|/2} \frac{G(x, x-z)}{|z|^{n + 1/2}} dy + \int_{|z| > |x|/2} \frac{G(x, x-z)}{|z|^{n + 1/2}} dy$$
		The second summand can be estimated by using the Schwartz decay of $G$:
		$$| G(x,x-z) | \leq \frac{C_{N}}{(1 + | x | + | x-z |)^{N}} \leq \frac{C_N}{(1+|x|)^{N}},$$
		and therefore we obtain:
		$$\Big{|} \int_{|z| > |x|/2} \frac{G(x, x-z)}{|z|^{n + 1/2}} dy \Big{|} \leq \frac{C_{N,n}}{(1+|x|)^{N}} \frac{1}{| x |^{1/2}}$$
		So the estimate should only be considered for $|x| \geq 1$. However, it actually suffices to bound on this set, as can be easily seen. So we merely have to consider the first summand. There, we can bound:
		$$| G(x,x-z) | \leq \frac{C_N}{(1+|x|)^{N}} \cdot | z |,$$
		by using Taylorapproximation at $(x,x)$ and since the integral of $| z |^{-n + 1/2}$ is bounded by $| x |^{1/2}$, the resulting expression is bounded in a similar way as before. So $\div_{1/2} G \in \mathcal{S}(\R^n)$.
	  \end{proof}
	  
	  It should be noted here, that there is no particular reason to restrict ourselves to $s = 1/2$, except for our a-priori interest in this particular case. Indeed, the following generalisation follows precisely along the same lines, we may only have to use higher order Taylor approximations:
	  
	  \begin{lem}
	\label{sfracdiv}
		Let $G \in \mathcal{S}_{od}(\R^{2n})$ be given. Then we have for all $s \in \R$:
		$$\div_{s} G \in \mathcal{S}(\R^n),$$
		where we use the following definition for the $s$-divergence:
		\begin{equation}
		\label{definitiongeneralfractionaldivergenceforarbs}
			\forall x \in \R^n: \div_{s} G(x) := \int_{\R^n} \frac{2G(x,y)}{|x-y|^{s}} \frac{dy}{| x-y |^n} = \int_{\R^n} \frac{G(x,y) - G(y,x)}{|x-y|^{s}} \frac{dy}{| x-y |^n}
		\end{equation}
		In addition, the map $G \mapsto \div_{s} G$ is continuous.
	\end{lem}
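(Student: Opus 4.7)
The plan is to adapt the proof of Lemma \ref{fracdiv} essentially verbatim, with the only modification being that the Taylor expansion of $G$ along the diagonal must be pushed to a higher order that depends on $s$. I would proceed in three steps.

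First, for pointwise well-definedness, I change variables $z = x-y$ to rewrite
\begin{equation*}
\div_{s} G(x) = \int_{\R^n} \frac{G(x, x-z) - G(x-z, x)}{|z|^{n+s}}\, dz,
\end{equation*}
and exploit the vanishing of all partial derivatives of $G$ along the diagonal: Taylor expansion to any integer order $k > \max(s, 0)$ yields $|G(x, x-z)| \leq C(x)\, |z|^{k}$ locally, so the integrand is bounded by $C(x)\, |z|^{k-n-s}$ near $z = 0$, which is integrable precisely when $k > s$. At infinity, the Schwartz decay of $G$ controls the integrand. The same dominated convergence argument as in Lemma \ref{fracdiv} allows me to differentiate under the integral and conclude $\div_{s} G \in C^{\infty}(\R^n)$.

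Next, for the rapid decay, I would estimate $\sup_{x \in \R^n} |x^{\gamma} \partial_{x}^{\alpha} \div_{s} G(x)|$ for arbitrary multi-indices $\gamma, \alpha$. Just as in Lemma \ref{fracdiv}, $\partial_x^\alpha$ can be pulled into the integral producing $s$-divergences of partial derivatives of $G$, which are themselves off-diagonal Schwartz, so it suffices to treat $\alpha = 0$. I then split the integral into the near-diagonal region $|z| \leq |x|/2$ and the far region $|z| > |x|/2$. On the far region, the Schwartz decay $|G(x, x-z)| \leq C_{N}(1 + |x|)^{-N}$ combined either with integrating $|z|^{-n-s}$ for $s > 0$, or with absorbing the $|z|^{|s|}$ growth into additional Schwartz decay in the second argument of $G$ when $s \leq 0$, controls the contribution. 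On the near-diagonal region, I use a Taylor expansion at $(x,x)$ of order $k > \max(s,0)$ together with Schwartz decay in the first argument to get $|G(x, x-z)| \leq C_{N,k}(1+|x|)^{-N}|z|^{k}$; the resulting integral of $|z|^{k-n-s}$ over $|z| \leq |x|/2$ produces at worst a power of $|x|$ that is then absorbed by choosing $N$ sufficiently large in terms of $\gamma$, $s$, $n$, and $k$.

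The only conceptual adjustment compared to Lemma \ref{fracdiv} is the bookkeeping of the Taylor order $k$ relative to $s$: for large positive $s$ one needs vanishing to a correspondingly high order along the diagonal, which is built into the definition of $\mathcal{S}_{od}(\R^{2n})$ since \emph{all} partial derivatives vanish; for $s \leq 0$ the kernel is actually regular at the diagonal and the main issue becomes the mild growth factor $|z|^{|s|}$ at infinity, which is tamed by the Schwartz decay of $G$ in its second argument. Since all bounds depend only on finitely many Schwartz seminorms of $G$ (through $N$ and the Taylor remainder), the continuity of $G \mapsto \div_{s} G$ is an immediate corollary of the same estimates, exactly as in the $s = 1/2$ case.
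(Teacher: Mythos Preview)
Your proposal is correct and follows precisely the approach the paper itself indicates: the paper does not give a standalone proof of this lemma but simply remarks that the argument for $s=1/2$ carries over verbatim, ``we may only have to use higher order Taylor approximations,'' and leaves the adaptation to the reader. Your write-up is in fact more thorough than the paper's hint, since you also explicitly address the regime $s\leq 0$ (where the diagonal singularity disappears but one must absorb the mild growth of $|z|^{-s}$ at infinity into the Schwartz decay of $G$), a point the paper does not spell out.
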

	
	The adaption of the proof of Lemma \ref{fracdiv} is left to the reader.\\
	
	  The significance of Lemma \ref{convolution} lies in the immediate connection to smoothing operations which we shall exploit later in order to restrict our attention to more regular distributions. This is useful when we try to establish estimates. On the other hand, Lemma \ref{fracdiv} actually provides a suitable notion of fractional $1/2$-divergence on certain functions which will enable us to define the fractional $1/2$-gradient $d_{1/2}$ for tempered distributions $u \in \mathcal{S}^\prime(\R)$. Namely, we define: 
	  \begin{defi}
	  \label{definitionfracgradondist}
	  	Given any $u \in \mathcal{S}^{\prime}(\R^n)$, we define $d_{1/2} u \in \mathcal{S}^{\prime}(\R^{2n})$ in the following way:
	 	\begin{equation}
	 	\label{dualityfracgrad}
	  		\forall G \in \mathcal{S}_{od}(\R^{2n}): \quad \langle d_{1/2} u, G \rangle := \langle u, \div_{1/2} G \rangle
		\end{equation}
		Here, $\langle \cdot, \cdot \rangle$ denotes the usual pairing between test functions and distributions.
	\end{defi}
	Clearly, Lemma \ref{fracdiv} proves that $d_{1/2} u \in \mathcal{S}^{\prime}_{od}(\R^{2n})$ by the continuity of the fractional divergence. Additionally, we can define $d_{s}$ for any $s \in \R$ much in the same way:
	\begin{equation}
	\label{definitiondistfracgradondist}
		\forall u \in \mathcal{S}^\prime (\R^n), \forall G \in \mathcal{S}^{\prime}_{od}(\R^{2n}): \quad \langle d_{s} u, G \rangle := \langle u, \div_{s} G \rangle,
	\end{equation}
	and as before, due to the continuity established in Lemma \ref{sfracdiv}, we have $d_{s} u \in \mathcal{S}^{\prime}_{od}(\R^{2n})$.\\
	
	Finally, let us present the following computation that will be crucial to our smoothing procedure and shows that the operations introduced are well-behaved with respect to convolution. For any $u \in \mathcal{S}^{\prime}(\R^n)$ and $G \in \mathcal{S}_{od}(\R^{2n})$, we have the following sequence of identities (where convolution on the distribution side is defined by duality as usual) for any $\varphi \in \mathcal{S}(\R^n)$:
	\begin{align}
	\label{proofofcomputationsofcommconvandfrac}
		\langle \varphi \ast d_{1/2}u, G \rangle	&= \langle d_{1/2}u, \varphi^{\#} \ast G \rangle \notag \\
										&= \langle u, \div_{1/2} \big{(} \varphi^{\#} \ast G \big{)} \rangle \notag \\
										&= \langle u, \varphi^{\#} \ast \div_{1/2} G \rangle \notag \\
										&= \langle \varphi \ast u, \div_{1/2} G \rangle \notag \\
										&= \langle d_{1/2} \big{(} \varphi \ast u \big{)} , G \rangle,
	\end{align}
	where we used:
	\begin{align}
	\label{proofofcomputationsofcommconvandfrac2}
		\div_{1/2} \big{(} \varphi^{\#} \ast G \big{)}(x) 	&= \int_{\R^n} \frac{2\varphi^{\#} \ast G(x,y)}{| x-y |^{1/2}} \frac{dy}{| x-y |^n} \notag \\
											&= \int_{\R^n} \int_{\R^n} \frac{\varphi^{\#}(z) 2G(x-z,y-z)}{| x-y |^{1/2}} dz \frac{dy}{| x-y |^n} \notag \\
											&= \int_{\R^n} \int_{\R^n} \frac{\varphi^{\#}(z) 2G(x-z,y-z)}{| x-y |^{1/2}} \frac{dy}{| x-y |^n} dz \notag \\
											&= \int_{\R^n} \varphi^{\#}(z) \int_{\R^n} \frac{2G(x-z,\tilde{y})}{| x-z-\tilde{y} |^{1/2}} \frac{d\tilde{y}}{| x-z-\tilde{y} |^n} dz \notag \\
											&= \int_{\R^n} \varphi^{\#}(z) \div_{1/2} G(x-z) dz \notag \\
											&=\big{(} \varphi^{\#} \ast \div_{1/2} G \big{)}(x)
	\end{align}
	We would like to mention that $\varphi^{\#}(x) := \varphi(-x)$. Thus, convolution and fractional divergence/gradient behave well with respect to each other (as expected from the local case) and so do fractional gradient and convolution. Again, the computations above generalize to fractional gradients $d_{s}$ and divergences $\div_s$ for arbitrary $s \in \mathbb{R}$, as the smoothness index $s=1/2$ did not play a significant role in the computations above:
	\begin{equation}
		\varphi \ast d_{s}u = d_{s} \big{(} \varphi \ast u \big{)}, \quad \forall \varphi \in \mathcal{S}(\R^n), \forall u \in \mathcal{S}^{\prime}(\R^n)
	\end{equation}
	The computations and comments above can be summarised as follows:
	\begin{lem}
	The following identity holds true for all real numbers $s \in \mathbb{R}$ as well as all $u \in \mathcal{S}^{\prime}(\R^n), G \in \mathcal{S}_{od}(\R^{2n})$ and $\varphi \in \mathcal{S}(\R^n)$:
	\begin{equation}
	\label{fracgradconv}
		\varphi \ast d_{s}u = d_{s} \big{(} \varphi \ast u \big{)}, \quad \varphi \ast \div_{s} G = \div_{s} \left( \varphi \ast G \right).
	\end{equation}
	\end{lem}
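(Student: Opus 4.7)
The plan is to observe that the chain of identities displayed in (3.7)--(3.8) for $s=1/2$ is entirely formal in the exponent, so essentially nothing needs to be changed to accommodate general $s \in \R$; the task is to justify each step cleanly and to make sure all integrals and dualities are legitimate.

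First I would establish the pointwise commutation $\div_s(\varphi \ast G) = \varphi \ast \div_s G$ on $\R^n$, where $\varphi \in \mathcal{S}(\R^n)$ and $G \in \mathcal{S}_{od}(\R^{2n})$. By Lemma \ref{convolution} we already know $\varphi \ast G \in \mathcal{S}_{od}(\R^{2n})$, so the left-hand side is well-defined and belongs to $\mathcal{S}(\R^n)$ by Lemma \ref{sfracdiv}. Writing out the definition and changing variables $\tilde y = y - z$ in the inner integral (noting that $|x-y|^{n+s} = |x-z-\tilde y|^{n+s}$ is invariant under the simultaneous shift) gives
\begin{align*}
\div_s(\varphi \ast G)(x) &= \int_{\R^n} \int_{\R^n} \varphi(z)\, \frac{2G(x-z,y-z)}{|x-y|^{n+s}}\, dz\, dy \\
&= \int_{\R^n} \varphi(z)\, \div_s G(x-z)\, dz = (\varphi \ast \div_s G)(x),
\end{align*}
provided we can interchange the order of integration. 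This is the one technical point: for $|y-x|$ small, one uses that all derivatives of $G$ vanish along the diagonal and hence a Taylor expansion to arbitrary order in the second variable renders $|G(x-z, y-z)|/|x-y|^{n+s}$ locally integrable uniformly in $z$; for $|y-x|$ large one uses the Schwartz decay of $G$ together with $|\varphi(z)| \lesssim (1+|z|)^{-N}$ for any $N$. Together these give absolute integrability of the double integral and Fubini applies. The same argument handles the primed version with $\varphi^{\#}$ in place of $\varphi$, since $\varphi^{\#} \in \mathcal{S}(\R^n)$ as well.

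Having the pointwise identity at hand, the distributional identity $\varphi \ast d_s u = d_s(\varphi \ast u)$ follows by the exact duality chain already performed in (3.7). Explicitly, for any $G \in \mathcal{S}_{od}(\R^{2n})$,
\begin{align*}
\langle \varphi \ast d_s u, G \rangle &= \langle d_s u, \varphi^{\#} \ast G \rangle = \langle u, \div_s(\varphi^{\#} \ast G)\rangle \\
&= \langle u, \varphi^{\#} \ast \div_s G\rangle = \langle \varphi \ast u, \div_s G \rangle = \langle d_s(\varphi \ast u), G \rangle,
\end{align*}
using (in order) the definition of convolution on distributions, Definition \ref{definitionfracgradondist} (extended to general $s$ as in (3.6)), the pointwise commutation from the previous step, the distributional convolution pairing again, and finally Definition \ref{definitionfracgradondist} once more. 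Since $G$ was arbitrary, the two distributions coincide in $\mathcal{S}^{\prime}_{od}(\R^{2n})$.

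The only genuine obstacle, as indicated above, is the Fubini step in proving $\div_s(\varphi \ast G) = \varphi \ast \div_s G$; everything else is a direct application of the definitions and of Lemmas \ref{convolution} and \ref{sfracdiv}, both of which have already been proved. Once Fubini is justified by the diagonal vanishing of $G$ (which is precisely the motivation recorded after Lemma \ref{fracdiv}) and the Schwartz decay of $\varphi$, the lemma follows for every $s \in \R$ with no change to the argument given for $s = 1/2$.
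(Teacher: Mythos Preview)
Your proposal is correct and follows essentially the same approach as the paper: the paper's proof consists precisely of the duality chain (3.7) together with the pointwise commutation (3.8), and then remarks that the value $s=1/2$ plays no role. You reproduce this exactly, adding only a more explicit justification of the Fubini step in the pointwise identity, which the paper leaves implicit.
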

	The proof is provided by the computations in \eqref{proofofcomputationsofcommconvandfrac} and \eqref{proofofcomputationsofcommconvandfrac2} above.\\
	
	It is also important to keep the following in mind for all $u \in \mathcal{S}(\R^n) \subset \mathcal{S}^\prime(\R^n)$ and $G \in \mathcal{S}_{od}(\R^{2n})$:
	\begin{align}
	\label{auxilliary}
		\langle d_{1/2} u, G \rangle	&= \langle u, \div_{1/2} G \rangle \notag \\
								&= \int_{\R^n} u(x) \div_{1/2} G(x) dx \notag \\
								&= \int_{\R^n} u(x) \int_{\R^n} \frac{2 G(x,y)}{| x-y |^{n+1/2}} dy dx \notag \\
								&= 2 \int_{\R^n} \int_{\R^n} \frac{u(x) G(x,y)}{| x-y |^{1/2}} \frac{dy dx}{| x-y |^n} \notag \\
								&= \int_{\R^n} \int_{\R^n} \frac{u(x) (G(x,y) - G(y,x))}{| x-y |^{1/2}} \frac{dy dx}{| x-y |^n} \notag \\
								&= \int_{\R^n} \int_{\R^n} \frac{(u(x) - u(y)) G(x,y)}{| x-y |^{1/2}} \frac{dx dy}{| x-y |^n} \notag \\
								&= \int_{\R^n} \int_{\R^n} d_{1/2} u(x,y) \cdot G(x,y) \frac{dx dy}{| x-y |^n},
	\end{align}
	where we used the pointwise definition of the $1/2$-gradient as in Mazowiecka-Schikorra \cite{schikorra} in the last equation. This shows that our definition of the (distributional) fractional gradient is consistent with the (pointwise) definition in Mazowiecka-Schikorra \cite[p.6]{schikorra}. Moreover, by approximation, the identity above can be extended to all functions for which the identity makes sense, due to the continuity with respect to the usual fractional Gagliardo-Sobolev norms (we shall explain this in more detail in the upcoming subsections). Finally, the computation in \eqref{auxilliary} naturally generalises to arbitrary $d_{s}$ and $\div_{s}$ without additional effort.\\

	\subsection{The Spaces $H^{s}(\R)$: Connections to the Fractional Gradient on Distributions}
	
	In Mazowiecka-Schikorra \cite{schikorra}, the fractional gradient was introduced in a pointwise sense by the formula:
	\begin{equation}
	\label{pointwisefracgrad}
		d_{s} u(x,y) := \frac{u(x) - u(y)}{| x-y |^s}, \quad \forall u \in \mathcal{M}(\R), \forall s \in \R_{+}.
	\end{equation}
	This expression connects via some Wente-type estimates proven in Mazowiecka-Schikorra \cite{schikorra} to integrability by compensation-phenomena and the regularity of fractional harmonic maps. It is a natural question to ask whether the fractional gradient in the distributional sense as defined in \eqref{dualityfracgrad} and the pointwise gradient in \eqref{pointwisefracgrad} agree. Such questions are also connected to characterisations of Sobolev-type spaces like the following found in Prats-Saksman \cite[Theorem 1.2]{prats} and Schikorra-Wang \cite[Theorem 1.4]{schiwang}:
	
	\begin{thm}
	\label{schiwangthm1.4}
		Let $s \in (0,1)$, $p,q \in ]1, \infty[$ and $f \in L^p(\R)$. Then, if we denote by $\dot{F}^{s}_{p,q}(\R)$ the usual homogeneous Triebel-Lizorkin function spaces:
		\begin{itemize}
			\item[(i)] We know $\dot{W}^{s, (p,q)}(\R) \subset \dot{F}^{s}_{p,q}(\R)$ together with:
			\begin{equation}
				\| f \|_{\dot{F}^{s}_{p,q}(\R)} \lesssim \| f \|_{\dot{W}^{s, (p,q)}(\R)}
			\end{equation}
			
			\item[(ii)] If $p > \frac{q}{1 + sq}$, then we also have the converse inclusion together with:
			\begin{equation}
			\label{secondpartofthm2.1}
				\| f \|_{\dot{W}^{s, (p,q)}(\R)} \lesssim \| f \|_{\dot{F}^{s}_{p,q}(\R)}
			\end{equation}
		\end{itemize}
		The constants depend on $s, p, q$.
	\end{thm}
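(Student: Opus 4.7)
The plan is to prove both embeddings via the standard Littlewood--Paley characterisation of $\dot{F}^{s}_{p,q}(\R)$ combined with pointwise comparisons to the Gagliardo integrand. Fix a Littlewood--Paley decomposition $f = \sum_{j \in \Z} \phi_{j} \ast f$ with $\phi_{j}(y) = 2^{j} \phi(2^{j} y)$ and $\int \phi = 0$, so that
$$\|f\|_{\dot{F}^{s}_{p,q}(\R)} \approx \Bigl\| \Bigl( \sum_{j} 2^{jsq} |\phi_{j} \ast f|^{q} \Bigr)^{1/q} \Bigr\|_{L^{p}(\R)}.$$
Denote the Gagliardo integrand by
$$g_{f}(x) := \left( \int_{\R} \frac{|f(x) - f(y)|^{q}}{|x-y|^{1+sq}}\, dy \right)^{1/q},$$
so that $\|f\|_{\dot{W}^{s,(p,q)}(\R)} = \|g_{f}\|_{L^{p}}$.

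For the first inclusion (i), I would establish the pointwise estimate $(\sum_{j} 2^{jsq} |\phi_{j} \ast f(x)|^{q})^{1/q} \ls g_{f}(x)$ for almost every $x$, after which taking $L^{p}$-norms finishes the argument. To obtain this inequality, the cancellation $\int \phi_{j} = 0$ allows me to write $\phi_{j} \ast f(x) = \int \phi_{j}(y) [f(x-y) - f(x)]\, dy$. Splitting the $y$-integration into dyadic annuli $|y| \sim 2^{-k}$ and applying H\"older's inequality with weight $|y|^{-1-sq}$ on each shell, the rapid Schwartz decay of $\phi$ forces the contribution from scales $k$ far from $j$ to be geometrically small. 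Organising the resulting bounds in $\ell^{q}(\Z)$ and swapping the order of summation with the integration in $y$ then yields $g_{f}(x)$ on the right-hand side.

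For the converse inclusion (ii), the strategy is more subtle: I would bound $g_{f}(x)$ pointwise by the LP square function via a Peetre maximal function argument. Decompose $f(x) - f(y) = \sum_{j} [\phi_{j} \ast f(x) - \phi_{j} \ast f(y)]$; when $|x-y| \approx 2^{-k}$, the low-frequency pieces $j \le k$ are controlled by $|x-y| \cdot 2^{j}\, \phi_{j}^{\ast} f(x) \ls 2^{j-k}\, \phi_{j}^{\ast} f(x)$ via a Bernstein-type gradient estimate combined with the Peetre maximal function $\phi_{j}^{\ast} f$, while the high-frequency pieces $j > k$ are dominated directly by $\phi_{j}^{\ast} f(x)$. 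Arranging these bounds in $\ell^{q}$ and integrating over $y$ against the weight $|x-y|^{-1-sq}$ reduces the claim to the vector-valued inequality
$$\Bigl\| \Bigl( \sum_{j} 2^{jsq} |\phi_{j}^{\ast} f|^{q} \Bigr)^{1/q} \Bigr\|_{L^{p}} \ls \Bigl\| \Bigl( \sum_{j} 2^{jsq} |\phi_{j} \ast f|^{q} \Bigr)^{1/q} \Bigr\|_{L^{p}},$$
which is a Fefferman--Stein-type statement for the fractional averaging operator hidden in $\phi_{j}^{\ast}$.

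The chief obstacle is locating the restriction $p > q/(1+sq)$ in (ii): this is precisely the sharp range in which the above vector-valued maximal inequality is available. Below this threshold, the interpolation between the ``smooth'' gradient bound on low frequencies and the Peetre bound on high frequencies fails, and the embedding breaks down in general; above it, the argument collapses to routine bookkeeping of H\"older exponents and geometric summation in the dyadic scale parameter.
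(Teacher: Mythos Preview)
The paper does not actually prove this theorem: it is quoted verbatim from Prats--Saksman \cite{prats} and Schikorra--Wang \cite{schiwang} and used as a black box in the subsequent discussion of $\dot{H}^{s}$ characterisations. There is therefore no in-paper proof to compare against.

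That said, your sketch follows the standard route taken in those references. Part (i) via the cancellation $\int\phi_{j}=0$ and dyadic H\"older on shells is correct and is essentially the argument in \cite{prats}. For part (ii) your Peetre maximal function scheme is also the right idea, and you have correctly identified where the constraint $p>q/(1+sq)$ enters: it is exactly the range in which the Fefferman--Stein vector-valued maximal inequality applies to the Hardy--Littlewood maximal function of exponent $r$ with $r<p$ and $r<q$, which in turn is what is needed to control the Peetre maximal function $\phi_{j}^{\ast}f$ by $(M(|\phi_{j}\ast f|^{r}))^{1/r}$ after choosing the Peetre parameter appropriately. One point you leave implicit is the precise bookkeeping linking the Peetre parameter $a$ in $\phi_{j}^{\ast,a}f(x)=\sup_{z}\frac{|\phi_{j}\ast f(x-z)|}{(1+2^{j}|z|)^{a}}$ to the exponent $r$ via $a>1/r$ in dimension one; this is where the numerology $p>q/(1+sq)$ is actually pinned down, and it would be worth spelling out if you intend this as a self-contained proof rather than a summary of the literature.
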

	
	The same result continues to hold true for $\R^n$ is we replace the condition $p > \frac{q}{1+sq}$ by $p > \frac{nq}{n+sq}$, see Prats-Saksman \cite[Theorem 1.2]{prats}. To be precise, the following definition for the spaces $\dot{W}^{s,(p,q)}(\R^n)$ is used: For any $f: \R^n \to \R$:
	$$\mathcal{D}_{s,q}(f)(x) := \left( \int_{\R} \frac{| f(x) - f(y) |^q}{| x-y |^{sq}} \frac{dy}{| x-y |^n} \right)^{1/q} = \left( \int_{\R} | d_{s} f(x,y) |^q \frac{dy}{| x-y |^n} \right)^{1/q},$$
	for all $1 \leq q < \infty$ and $0 < s < 1$ and moreover:
	$$\| f \|_{\dot{W}^{s,(p,q)}(\R^n)} := \| \mathcal{D}_{s,q}(f)(x) \|_{L^{p}(\R^n)},$$
	for every $1 \leq p \leq \infty$. If $p = q$, these spaces correspond to the usual homogeneous Gagliardo-Sobolev spaces $\dot{W}^{s,p}(\R^n)$.\\
	
	The key point of Theorem \ref{schiwangthm1.4} is that we may characterise the spaces $\dot{H}^{s}(\R^n) \simeq \dot{F}^{s}_{2,2}(\R^n)$ and more generally the Bessel potential spaces $\dot{F}^{s}_{p,2}(\R^n)$ for $s \in (0,1)$ by means of the integrability of the pointwise fractional gradient \eqref{pointwisefracgrad}, namely:
	$$d_{s} u \in L^{2}_{od}(\R^{2n}) \Leftrightarrow u \in \dot{H}^{s}(\R^n), \quad \forall s \in (0,1)$$
	A natural question to pose is then whether these characterisations carry over to the distributional setup we introduced in the previous subsection.\\
	
	We have previously seen, thanks to \eqref{auxilliary}, that the pointwise definition in \eqref{pointwisefracgrad} and the distributional one in \eqref{dualityfracgrad} agree at least for $u \in \mathcal{S}(\R)$. This is an indication that our definitions of a distribution and the distributional fractional gradient are reasonable and should allow for an extension of the corresponding notion of fractional gradient in Mazowiecka-Schikorra \cite{schikorra}.
	
	Let us now consider a function $u \in {H}^{s}(\R^n)$. We may now either approximate $u$ by functions $u_n \in \mathcal{S}(\R^n)$ converging in ${H}^s (\R^n)$ to $u$ and deduce from the identity \eqref{auxilliary} that $d_s u$ pointwise and in the distributional sense are the same. Alternatively, the following computation proves this as well:
	\begin{align}
	\label{compofdistgradispointwisefracgrad}
		\forall G \in \mathcal{S}_{od}(\R^{2n}): \langle d_s u, G \rangle 	&= \langle u, \div_s G \rangle \notag \\
														&= \int_{\R^n} u(x) \int_{\R^n} \frac{2 G(x,y)}{| x-y |^s} \frac{dy}{| x-y |^n} dx \notag \\
														&= \iint_{\R^{2n}} \frac{2 u(x) G(x,y)}{| x-y |^s} \frac{dy dx}{| x-y |^n} \notag \\
														&= \iint_{\R^{2n}} \frac{u(x) - u(y)}{| x-y |^s} G(x,y) \frac{dy dx}{| x-y |^n} + \iint_{\R^{2n}} \frac{u(x) + u(y)}{| x-y |^s} G(x,y) \frac{dy dx}{| x-y |^n} \notag \\
														&= \iint_{\R^{2n}} \frac{u(x) - u(y)}{| x-y |^s} G(x,y) \frac{dy dx}{| x-y |^n} \notag \\
														&= \iint_{\R^{2n}} d_{s} u (x,y) G(x,y) \frac{dy dx}{| x-y |^n}.
	\end{align}
	Here, we used:
	$$2 u(x) = \left( u(x) + u(y) \right) + \left( u(x) - u(y) \right),$$
	as well as integrability and fast decay of $G(x,y)$ to ensure integrability of $|x-y|^{-s} (u(x) + u(y)) G(x,y)$ (using Sobolev embeddings). Thus, the pointwise fractional gradient \eqref{pointwisefracgrad} and the distributional one \eqref{dualityfracgrad} agree for all $u \in H^{s}(\R)$. Stated as a lemma, we find:
	
	\begin{lem}
		If $s \in (0,1)$ and $u \in \dot{H}^{s}(\R^n)$, then:
		$$\langle d_s u, G \rangle = \iint_{\R^{2n}} d_{s} u(x,y) G(x,y) \frac{dy dx}{| x-y |^n},$$
		for all $G \in \mathcal{S}_{od}(\R^{2n})$. Hence, the distributional fractional gradient actually agrees with the pointwise definition in \eqref{pointwisefracgrad}.
	\end{lem}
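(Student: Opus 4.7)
The strategy I would follow is essentially a careful reading of the computation already displayed in \eqref{compofdistgradispointwisefracgrad}, promoted to a rigorous proof by a density argument. For $u \in \mathcal{S}(\R^n)$ the identity is the $s$-analogue of \eqref{auxilliary}, so the genuine content lies in the extension to general $u \in \dot{H}^s(\R^n)$. I would therefore first approximate $u$ by a sequence $u_k \in \mathcal{S}(\R^n)$ converging to $u$ in $\dot{H}^s(\R^n)$, use the Schwartz case to write both sides of the claimed identity for each $u_k$, and then pass to the limit.

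For the right-hand side, Theorem \ref{schiwangthm1.4} together with the identification $\dot{H}^s \simeq \dot{F}^s_{2,2}$ gives $d_s u_k \to d_s u$ in $L^2_{od}(\R^{2n})$. One verifies directly that every $G \in \mathcal{S}_{od}(\R^{2n})$ lies in $L^2_{od}(\R^{2n})$: the vanishing of all derivatives of $G$ along the diagonal forces $|G(x,y)| \lesssim_N |x-y|^N$ locally for every $N$, which combined with the rapid decay of $G$ at infinity ensures that $|G|^2 |x-y|^{-n}$ is integrable. Cauchy-Schwarz in $L^2_{od}$ then delivers convergence of the right-hand side. For the left-hand side, $\langle d_s u_k, G\rangle = \langle u_k, \div_s G\rangle$ by Definition \ref{definitionfracgradondist}, and one checks that $\div_s G \in \dot{H}^{-s}(\R^n)$ — for instance via its Fourier symbol, or by recognising $\div_s$ as the formal adjoint of $d_s$ and invoking Theorem \ref{schiwangthm1.4} once more — so that $\langle u_k, \div_s G\rangle \to \langle u, \div_s G\rangle$. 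Passing to the limit on both sides yields the desired formula.

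The main obstacle I anticipate lies not in the symbolic manipulations but in justifying the splitting $2u(x) = (u(x) - u(y)) + (u(x) + u(y))$ at the level of absolutely convergent integrals, should one prefer the direct route of \eqref{compofdistgradispointwisefracgrad} over approximation. The anti-symmetric piece is benign thanks to Cauchy-Schwarz in $L^2_{od}$, but the symmetric piece vanishes only after the full integration against the anti-symmetric $G$, so to invoke Fubini one must first establish absolute integrability of $|u(x) + u(y)||G(x,y)||x-y|^{-n-s}$. This can be done using fractional Sobolev embeddings for $u$ together with the strong decay and diagonal vanishing of $G$, but it is precisely this bookkeeping that makes the density argument the cleaner option in practice.
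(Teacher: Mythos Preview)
Your proposal is correct and mirrors the paper's own treatment: the paper explicitly offers both the density argument (approximating $u$ by Schwartz functions in $\dot{H}^s$ and invoking the $s$-analogue of \eqref{auxilliary}) and the direct computation \eqref{compofdistgradispointwisefracgrad} with the splitting $2u(x) = (u(x)+u(y)) + (u(x)-u(y))$, justifying integrability of the symmetric piece via Sobolev embeddings and the decay/diagonal vanishing of $G$. The only difference is emphasis --- the paper foregrounds the direct computation while you prefer the density route --- but the content is the same.
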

	
	So far, we have been deducing simpler formulas for $d_s u$ provided $u$ is sufficiently regular, i.e. $u \in \dot{H}^{s}(\R^n)$. Naturally, we would also be interested in results going the other direction, therefore using statements about the integrability of the distributional fractional gradient $d_s u$ to obtain regularity of $u$. To start our investigations, let us assume that $s \in (0,1)$ and that we have:
	\begin{equation}
	\label{distgradinl2}
		d_s u \in L^{2}_{od}(\R^{2n}),
	\end{equation}
	i.e. we assume that there exists a $F \in L^{2}_{od}(\R^{2n})$, such that:
	$$\forall G \in \mathcal{S}_{od}(\R^{2n}): \quad \langle d_s u, G \rangle = \iint_{\R^{2}n} F(x,y) G(x,y) \frac{dy dx}{| x-y |^n}.$$
	Such a condition could be formulated in an equivalent way as the existence of a constant $C > 0$, such that for every $G \in L^{2}_{od}(\R^{2n})$, it holds:
	$$\big{|} \langle d_s u, G \rangle \big{|} \leq C \| G \|_{L^{2}_{od}(\R^{2n})}.$$
	To arrive at a regularity statement, the key idea is to approximate $u$ by smooth functions $u_{k}$ of at most polynomial growth, such that:
	$$u_k \to u \text{ in } \mathcal{S}^{\prime}(\R^n), \text{ as } k \to +\infty,$$
	and such that:
	$$\| d_{s} u_k \|_{L^{2}_{od}(\R^{2n})} \leq C \| F \|_{L^{2}_{od}(\R^{2n})}, \quad \forall k \in \mathbb{N},$$
	where $C > 0$ is independent of $k \in \mathbb{N}$. For details on how such an approximating sequence can be obtained, we refer to the proof of Theorem \ref{l1est} below, for now it suffices to know that such approximating sequences do exist and that they may be constructed using convolutions by a smoothing kernel. Now, if $u_k$ is smooth and of at most polynomial growth, we see that:
	$$\iint_{\R^{2n}} \Big{|} \frac{u_k (x) \pm u_k (y)}{| x-y |^s} G(x,y) \Big{|} \frac{dy dx}{| x-y |^n} < + \infty, \quad \forall G \in \mathcal{S}_{od}(\R^2), \forall k \in \mathbb{N}.$$
	Therefore, arguing as in \eqref{compofdistgradispointwisefracgrad} or \eqref{auxilliary}, we find:
	$$\langle d_{s} u_k, G \rangle = \iint_{\R^{2n}} \frac{u_k (x) - u_k (y)}{| x-y |^s} G(x,y) \frac{dy dx}{| x-y |^n}, \quad \forall G \in \mathcal{S}_{od}(\R^{2n}), \forall k \in \mathbb{N}.$$
	Since $u_k$ is smooth and bounded by polynomials, we clearly have $d_s u_k \in L^{2}_{od, loc}(\R^2)$. Therefore, we may deduce, by choosing $G$ to be supported in compact subsets:
	$$d_s u_k = F \text{ on every compact subset } \mathbb{R}^{2n} \setminus \{ (x,y) \in \R^{2n}\ |\ x = y \},$$
	which also proves directly:
	$$d_s u_k \in L^{2}_{od}(\R^{2n}),$$
	i.e. the pointwise fractional gradient $d_s u_k$ is actually in $L^{2}_{od}(\R^{2n})$ and satisfies:
	$$\| d_s u_k \|_{L^{2}_{od}(\R^{2n})} \leq C \| F \|_{L^{2}_{od}(\R^{2n})},$$
	for all $k \in \mathbb{N}$. By Theorem \ref{schiwangthm1.4}, we thus find:
	$$u_k \in \dot{H}^{s}(\R^n) \text{ and } \ \| u_k \|_{\dot{H}^{s}(\R^n)} \leq \tilde{C} \| F \|_{L^{2}_{od}(\R^{2n})}, \quad \forall k \in \mathbb{N}.$$
	This implies that there is a weakly converging subsequence of $(u_k)$ converging to some $\tilde{u} \in \dot{H}^{s}(\R^n)$. Now, this shows that:
	$$\langle d_s u, G \rangle = \lim_{n \to +\infty} \langle d_s u_k, G \rangle = \langle d_s \tilde{u}, G \rangle, \quad \forall G \in \mathcal{S}_{od}(\R^{2n}),$$
	showing that:
	$$d_s \left( u - \tilde{u} \right) = 0.$$
	In the next subsection in Theorem \ref{l1est}, we shall see that this implies:
	$$u - \tilde{u} = C,$$
	for some constant $C \in \mathbb{R}$. Thus, we also have:
	$$u = \tilde{u} + C \in \dot{H}^{s}(\R^n).$$
	To summarise, we have shown:
	
	\begin{lem}
		Let $s \in (0,1)$, $u \in \mathcal{S}^{\prime}(\R^n)$ and assume that there exists a constant $C > 0$, such that:
		\begin{equation}
		\label{lemmaestforoperatornorm}
			\left| \langle d_s u, G \rangle \right| \leq C \| G \|_{L^{2}_{od}(\R^{2n})}, \quad \forall G \in \mathcal{S}_{od}(\R^{2n}).
		\end{equation}
		Then:
		$$u \in \dot{H}^{s}(\R^n),$$
		and we know that $\| u \|_{\dot{H}^{s}(\R^n)}$ can be estimated from above and below by the operatornorm (i.e. the smallest $C$ possible in the inequality \eqref{lemmaestforoperatornorm} above).
	\end{lem}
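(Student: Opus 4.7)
The plan is to upgrade the dual bound \eqref{lemmaestforoperatornorm} into a genuine $\dot{H}^s$-norm estimate on $u$ by regularising to the smooth setting where Theorem \ref{schiwangthm1.4} applies, and then passing to the limit. First, density of $\mathcal{S}_{od}(\R^{2n})$ in $L^{2}_{od}(\R^{2n})$ together with Riesz representation upgrades the hypothesis into the existence of $F \in L^{2}_{od}(\R^{2n})$ with $\|F\|_{L^{2}_{od}} \leq C$ such that
$$\langle d_s u, G\rangle = \iint_{\R^{2n}} F(x,y)\,G(x,y)\,\frac{dx\,dy}{|x-y|^{n}}, \quad \forall\,G \in \mathcal{S}_{od}(\R^{2n}).$$

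Next, I would fix a non-negative $\varphi \in \mathcal{S}(\R^n)$ of integral one, set $\varphi_\varepsilon(x) := \varepsilon^{-n}\varphi(x/\varepsilon)$, and study $u_\varepsilon := \varphi_\varepsilon \ast u$. Standard properties of tempered distributions give $u_\varepsilon \in C^{\infty}(\R^n)$ of at most polynomial growth with $u_\varepsilon \to u$ in $\mathcal{S}^\prime(\R^n)$. The commutation identity \eqref{fracgradconv} together with the generalised Young estimate \eqref{generall1young} then yield $d_s u_\varepsilon = \varphi_\varepsilon \ast F$ as elements of $L^{2}_{od}(\R^{2n})$, with the uniform bound
$$\|d_s u_\varepsilon\|_{L^{2}_{od}} \leq \|\varphi_\varepsilon\|_{L^{1}}\,\|F\|_{L^{2}_{od}} = \|F\|_{L^{2}_{od}} \leq C.$$
Since $u_\varepsilon$ is smooth of polynomial growth, the identity \eqref{compofdistgradispointwisefracgrad} ensures that $d_s u_\varepsilon$ coincides with the pointwise expression $(u_\varepsilon(x)-u_\varepsilon(y))/|x-y|^s$, so this really is a uniform pointwise $\dot{W}^{s,(2,2)}$-estimate.

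Applying Theorem \ref{schiwangthm1.4} with $p=q=2$ (the case $\dot{H}^s \simeq \dot{F}^{s}_{2,2}$) converts this into the Sobolev bound $\|u_\varepsilon\|_{\dot{H}^{s}(\R^n)} \lesssim \|F\|_{L^{2}_{od}}$, uniformly in $\varepsilon$. By reflexivity of $\dot{H}^s(\R^n)$, I extract a subsequence $u_{\varepsilon_k} \rightharpoonup \tilde{u}$ weakly in $\dot{H}^s(\R^n)$, which in particular converges in $\mathcal{S}^\prime(\R^n)$ modulo constants. Comparing with $u_\varepsilon \to u$ in $\mathcal{S}^\prime(\R^n)$ forces $d_s(u-\tilde{u}) = 0$ as an element of $\mathcal{S}^\prime_{od}(\R^{2n})$, and the forthcoming kernel statement from Theorem \ref{l1est} yields $u - \tilde{u}$ constant, so $u \in \dot{H}^{s}(\R^n)$ with the desired upper bound. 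The reverse estimate follows from Cauchy-Schwarz applied to the pointwise identity of the previous lemma, combined once more with Theorem \ref{schiwangthm1.4}.

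The main obstacle is organising the limit so that the weak $\dot{H}^s$-limit $\tilde{u}$ is compatible with the $\mathcal{S}^\prime$-limit $u$: a priori these differ by a tempered distribution annihilated by $d_s$, and it is precisely the non-trivial kernel characterisation from Theorem \ref{l1est} that rules out anything other than constants. A secondary technical point is justifying that $d_s u_\varepsilon$ is represented by $\varphi_\varepsilon \ast F$ as an honest $L^{2}_{od}$-function rather than merely as a distribution; this uses the smoothness of $u_\varepsilon$ and the pointwise-distributional coincidence \eqref{compofdistgradispointwisefracgrad} hand in hand.
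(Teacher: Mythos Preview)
Your proposal is correct and follows essentially the same route as the paper: represent $d_s u$ by some $F\in L^2_{od}$ via Riesz, mollify $u$ to smooth functions of polynomial growth so that the distributional and pointwise fractional gradients agree, transfer the uniform $L^2_{od}$-bound via \eqref{generall1young} and Theorem~\ref{schiwangthm1.4} into a uniform $\dot H^s$-bound, extract a weak limit $\tilde u$, and invoke the kernel characterisation (Corollary~\ref{cor2}, stemming from Theorem~\ref{l1est}) to conclude $u-\tilde u$ is constant. The only cosmetic difference is that the paper compares $d_s u_k$ and $d_s\tilde u$ directly on test functions $G\in\mathcal S_{od}$ rather than phrasing it as convergence in $\mathcal S'$ modulo constants, but the content is identical.
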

	
	Using Theorem \ref{schiwangthm1.4} for other values of $p$, we may also deduce corresponding estimates for these cases, namely:
	
	\begin{lem}
		Let $s \in (0,1)$, $1 < p < +\infty$, $u \in \mathcal{S}^{\prime}(\R^n)$ and assume that there exists a constant $C > 0$, such that:
		\begin{equation}
		\label{lemmaestforoperatornormgeneralp}
			\left| \langle d_s u, G \rangle \right| \leq C \left( \int_{\R^{n}} \left( \int_{\R^n} | G(x,y) |^2 \frac{dy}{| x-y |^n} \right)^{q/2} dx \right)^{1/q}, \quad \forall G \in \mathcal{S}_{od}(\R^{2n}),
		\end{equation}
		where:
		$$\frac{1}{p} + \frac{1}{q} = 1.$$
		Then:
		$$u \in \dot{F}^{s}_{p,2}(\R^n),$$
		and we know that $\| u \|_{\dot{F}^{s}_{p,2}(\R^n)}$ can be estimated from above and below by the operatornorm (i.e. the smallest $C$ possible in the inequality \eqref{lemmaestforoperatornorm} above).
	\end{lem}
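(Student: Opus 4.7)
The plan is to follow, step by step, the blueprint of the preceding $p=2$ lemma, with $L^{2}_{od}(\R^{2n})$ replaced throughout by the mixed Lebesgue space $L^{q}\bigl(\R^{n};L^{2}(\R^{n},\tfrac{dy}{|x-y|^{n}})\bigr)$ on the test-function side, and by its dual $L^{p}\bigl(\R^{n};L^{2}(\R^{n},\tfrac{dy}{|x-y|^{n}})\bigr)$ on the distribution side. First, I would regularise by setting $u_{k} := \varphi_{k} \ast u$ for a standard non-negative mollifier $\varphi_{k} \in \mathcal{S}(\R^{n})$ with $\int \varphi_{k} = 1$, so that $u_{k}$ is smooth of at most polynomial growth and $u_{k} \to u$ in $\mathcal{S}^{\prime}(\R^{n})$. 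By \eqref{fracgradconv}, $\langle d_{s} u_{k}, G \rangle = \langle d_{s} u, \varphi_{k}^{\#} \ast G \rangle$ for every $G \in \mathcal{S}_{od}(\R^{2n})$, and a Minkowski-type estimate applied sequentially, first in the inner $L^{2}(\tfrac{dy}{|x-y|^{n}})$-norm and then in the outer $L^{q}_{x}$-norm (exactly as in the derivation of \eqref{generall1young}), yields a mixed-norm Young inequality $\|\varphi_{k}^{\#} \ast G\|_{L^{q}_{x} L^{2}_{od,y}} \leq \|G\|_{L^{q}_{x} L^{2}_{od,y}}$. Consequently, the hypothesis \eqref{lemmaestforoperatornormgeneralp} transfers to each $u_{k}$ with the same constant $C$.

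Because $u_{k}$ is smooth and of polynomial growth, the computation \eqref{auxilliary} applies and identifies $\langle d_{s} u_{k}, G \rangle$ with the honest integral $\iint d_{s} u_{k}(x,y)\, G(x,y)\, \tfrac{dx\,dy}{|x-y|^{n}}$, where $d_{s} u_{k}$ is the pointwise gradient of \eqref{pointwisefracgrad}. Invoking the density of $\mathcal{S}_{od}(\R^{2n})$ in the mixed Lebesgue space $L^{q}_{x} L^{2}_{od,y}$ together with the standard duality between these spaces, the transferred inequality becomes equivalent to the membership $d_{s} u_{k} \in L^{p}_{x} L^{2}_{od,y}$ with norm at most $C$; in the notation introduced after Theorem \ref{schiwangthm1.4}, this is exactly $\mathcal{D}_{s,2}(u_{k}) \in L^{p}(\R^{n})$, i.e.\ $u_{k} \in \dot{W}^{s,(p,2)}(\R^{n})$ uniformly in $k$. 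Theorem \ref{schiwangthm1.4}(i), whose embedding direction holds without any dimensional restriction, then gives the uniform bound $\|u_{k}\|_{\dot{F}^{s}_{p,2}(\R^{n})} \lesssim C$.

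Since $1 < p < \infty$, the homogeneous Triebel-Lizorkin space $\dot{F}^{s}_{p,2}(\R^{n})$ is reflexive, so a subsequence of $(u_{k})$ converges weakly to some $\tilde{u} \in \dot{F}^{s}_{p,2}(\R^{n})$ with $\|\tilde{u}\|_{\dot{F}^{s}_{p,2}} \lesssim C$. Passing to the limit in the pairing $\langle d_{s} u_{k}, G \rangle$ as in the $p=2$ argument forces $d_{s}(u - \tilde{u}) = 0$ in $\mathcal{S}^{\prime}_{od}(\R^{2n})$, and Theorem \ref{l1est} (once established) then gives $u - \tilde{u} \equiv \text{const}$, so $u \in \dot{F}^{s}_{p,2}(\R^{n})$ with the desired upper bound on $\|u\|_{\dot{F}^{s}_{p,2}}$. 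For the matching lower bound on $C$, I would start from $u \in \dot{F}^{s}_{p,2}(\R^{n})$, apply Theorem \ref{schiwangthm1.4}(ii) to bound $\|u\|_{\dot{W}^{s,(p,2)}} \lesssim \|u\|_{\dot{F}^{s}_{p,2}}$, and then apply Cauchy-Schwarz in $y$ followed by H\"older in $x$ directly on the integral representation of $\langle d_{s} u, G \rangle$. The principal technical hurdles that I expect will require careful verification are the mixed-norm Young inequality for $G \mapsto \varphi \ast G$, the density of $\mathcal{S}_{od}(\R^{2n})$ in $L^{q}_{x} L^{2}_{od,y}$ that underlies the duality step, and the dimensional restriction $p > \tfrac{nq}{n+sq}$ in Theorem \ref{schiwangthm1.4}(ii) (specialising here to $p > \tfrac{2n}{n+2s}$), which is needed only for the lower bound and may have to be carried as an additional hypothesis in that direction.
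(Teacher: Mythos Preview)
Your proposal is correct and follows essentially the same approach as the paper, which merely states that ``the proof proceeds precisely the same way'' as the $p=2$ case and refers forward to the approximation machinery in the next section. You have faithfully unpacked what that sentence means: mollify, transfer the operator-norm bound via a mixed-norm Young inequality, identify the distributional and pointwise gradients, invoke Theorem~\ref{schiwangthm1.4}, extract a weak limit by reflexivity, and close with Corollary~\ref{cor2}/Theorem~\ref{l1est}. If anything, your write-up is more careful than the paper's, since you explicitly flag the dimensional restriction $p > \tfrac{2n}{n+2s}$ needed for the lower bound via Theorem~\ref{schiwangthm1.4}(ii), a point the paper does not isolate.
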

	
	The proof proceeds precisely the same way, we again refer to the next section for more details concerning the kind of approximations we use. 
	
	\subsection{Fractional Laplacians and Distributional Fractional Gradients}
	
	Lastly, we would like to briefly explore possible connections between the fractional Laplacian and the distributional fractional gradient as introduced in \eqref{definitiondistfracgradondist}. We shall mostly restrict to the case $\R$ for convenience's sake.
	
	As seen in Mazowiecka-Schikorra \cite[p.6]{schikorra}, one has that:
	\begin{equation}
	\label{schikorrafraclaplacianid}
		\div_{s} d_s u = (-\Delta)^s u,
	\end{equation}
	for $u \in \dot{H}^{s}(\R)$ due to the dual definition of the fractional divergence provided there. This identity means, that the following holds:
	$$\int_{\R} \int_{\R} d_{s} f(x,y) d_{s} g(x,y) \frac{dy dx}{| x-y |} = \int_{\R} (-\Delta)^{s} f(x) g(x) dx $$
	However, we see that our definition of the fractional divergence $\div_s$ does not immediately make sense for $d_s u$, as the latter is merely a distribution in $\mathcal{S}_{od}^{\prime}(\R^2)$. Indeed, the fractional divergence we defined in the previous subsections (\eqref{definitiongeneralfractionaldivergenceforarbs}) merely leads to a meaningful object for off-diagonal Schwarz functions. One might wonder, if we may extend the definition of the fractional divergence to a bigger subset of the collection of off-diagonal distributions and do so in such way that an identity similar to \eqref{schikorrafraclaplacianid} continues to hold true for the distributional fractional gradient and (distributional) fractional divergence. Naturally, resolving this issue hinges on the definition of a fractional divergence on off-diagonal distributions. We shall try to provide some ideas into this direction in this subsection.\\
	
	Firstly, assume that $u \in \mathcal{S}(\R), G \in \mathcal{S}_{od}(\R^2)$, then we have:
	$$\langle d_s u, G \rangle = \langle u, \div_{s} G \rangle,$$
	directly due to the definition in \eqref{definitiondistfracgradondist}. Since for $T \in \mathcal{S}_{od}^{\prime}(\R^2)$, we expect $\div_s T$ to be a tempered distribution on $\R$ and $G \in \mathcal{S}_{od}(\R^2) \subset \mathcal{S}_{od}^{\prime}(\R^2)$, this identity suggests that the correct definition should be:
	\begin{equation}
	\label{distributionaldivergencedual}
		\langle \div_s T, \varphi \rangle := \langle T, d_s \varphi \rangle, \quad \forall \varphi \in \mathcal{S}(\R).
	\end{equation}
	However, it is immediately clear that $d_s \varphi$ will not belong to $\mathcal{S}_{od}(\R^2)$, if $\varphi \neq 0$. This can be trivially observed by fixing $y$ appropriately and letting $x \to \pm \infty$ to deduce that the decay is not faster than any given polynomial, if $\varphi \neq 0$. As a result, the meaning behind the pairing \eqref{distributionaldivergencedual} is in general unclear and, in fact, the author is unable to assign a general meaning to \eqref{distributionaldivergencedual} for arbitrary $T \in \mathcal{S}_{od}^{\prime}(\R^2)$. However, for particuarily well-behaved $T$, \eqref{distributionaldivergencedual} can be given a meaning by using an approximation procedure. For this, we need to be able to approximate $d_s \varphi$ by a sequence $(G_k(\varphi)) \subset \mathcal{S}_{od}(\R^2) \subset X \subset \mathcal{S}^{\prime}_{od}(\R^2)$ in a Banach space $X$, which contains all pointwise fractional gradients $d_s \psi$ for $\psi$ a Schwartz function, with respect to the norm $\| \cdot \|_{X}$\footnote{Observe that one may restrict to such spaces $X$ for which $\mathcal{S}_{od}(\R^2)$ is a dense subset. This can be done by going over to the topological closure of the subset $\mathcal{S}_{od}(\R^2)$ in $X$ which continues to possess all necessary properties.} and assume that there exists $C > 0$ such that:
	$$\left| \langle T, G \rangle \right| \leq C \| G \|_{X}, \quad \forall G \in \mathcal{S}_{od}(\R^2).$$
	Then, we may define:
	\begin{equation}
	\label{divergencebyapproximationfordist}
		\langle \div_s T, \varphi \rangle := \lim_{k \to +\infty} \langle T, G_k(\varphi) \rangle,
	\end{equation}
	and this defines $\div_{s} T$ uniquely as a linear operator. To ensure that $\div_s T$ is actually continuous with respect to the locally convex Fr\'echet topology on $\mathcal{S}(\R)$ induced by the Schwartz seminorms, we need to have that the inclusion map $\psi \mapsto d_s \psi$ for all $\psi \in \mathcal{S}(\R)$ is continuous with respect to the norm topology on $X$.
	
	We shall discuss some circumstances under which this is possible:\\
	
	A very simple example is $X = L^{2}_{od}(\R^2)$. Naturally, the space of off-diagonal Schwartz functions lies dense within this space and the map $\psi \mapsto d_s \psi$ for $s \in (0,1)$ is continuous due to the inclusion $\mathcal{S}(\R) \subset \dot{H}^{s}(\R)$. Thus, the existence of approximating sequences $(G_{k}(\varphi))$ in $\mathcal{S}_{od}(\R^2)$ converging to $d_s \varphi$ in $L^{2}_{od}(\R^2)$ for any $\varphi \in \mathcal{S}(\R)$ is ensured. Therefore, we have:
	
	\begin{lem}
	\label{extensionoffracdivtodist}
		Let $T \in \mathcal{S}^{\prime}_{od}(\R^2)$ be such that there exists $C > 0$, such that:
		$$\left| \langle T, G \rangle \right| \leq C \| G \|_{L^{2}_{od}(\R^2)}, \quad \forall G \in \mathcal{S}_{od}(\R^2).$$
		Then $\div_{1/2} T$ can be defined by the formula in \eqref{divergencebyapproximationfordist}.
	\end{lem}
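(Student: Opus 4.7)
The plan is to show that the formula \eqref{divergencebyapproximationfordist} yields a well-defined tempered distribution on $\R$, which requires four independent checks: existence of the approximating sequences, convergence of the defining limit, independence of the choice of sequence, and continuity with respect to the Schwartz topology on $\mathcal{S}(\R)$.

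First, I would fix $\varphi \in \mathcal{S}(\R)$ and verify that $d_{1/2} \varphi \in L^{2}_{od}(\R^2)$. Since $\mathcal{S}(\R) \subset \dot{H}^{1/2}(\R)$ and the latter space is characterised by $d_{1/2} \varphi \in L^{2}_{od}(\R^2)$ via Theorem \ref{schiwangthm1.4}, this is immediate. The approximating sequence is then obtained from the density of $\mathcal{S}_{od}(\R^2)$ in $L^{2}_{od}(\R^2)$ (observed in the paragraph preceding the lemma), which yields $(G_k(\varphi)) \subset \mathcal{S}_{od}(\R^2)$ with $G_k(\varphi) \to d_{1/2} \varphi$ in $L^{2}_{od}(\R^2)$.

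Next, I would observe that the scalar sequence $\langle T, G_k(\varphi) \rangle$ is Cauchy, thanks to the assumed bound on $T$: $|\langle T, G_k(\varphi) - G_l(\varphi) \rangle| \leq C \|G_k(\varphi) - G_l(\varphi)\|_{L^{2}_{od}}$, which tends to zero as $k, l \to +\infty$. So the limit in \eqref{divergencebyapproximationfordist} exists. Independence from the choice of sequence follows from the very same estimate applied to the difference $G_k(\varphi) - H_k(\varphi)$ of two approximating sequences, since that difference still converges to zero in $L^{2}_{od}$. Linearity of $\varphi \mapsto \langle \div_{1/2} T, \varphi \rangle$ is then automatic: for any $\alpha, \beta \in \C$ and $\varphi, \psi \in \mathcal{S}(\R)$, the combination $\alpha G_k(\varphi) + \beta G_k(\psi)$ is an admissible approximating sequence for $d_{1/2}(\alpha \varphi + \beta \psi) = \alpha d_{1/2}\varphi + \beta d_{1/2}\psi$ in $L^{2}_{od}$, and the pairing with $T$ splits linearly.

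Finally, I would establish continuity of $\div_{1/2} T$ on $\mathcal{S}(\R)$ by passing to the limit in the bound to obtain $|\langle \div_{1/2} T, \varphi \rangle| \leq C \|d_{1/2}\varphi\|_{L^{2}_{od}(\R^2)}$ and then controlling $\|d_{1/2} \varphi\|_{L^{2}_{od}(\R^2)}^{2} = \iint_{\R^2} |\varphi(x)-\varphi(y)|^2 |x-y|^{-2} \, dx\, dy$ by finitely many Schwartz seminorms of $\varphi$. For this one splits the region of integration: on $\{|x-y| \leq 1\}$ the mean value estimate $|\varphi(x)-\varphi(y)| \leq \|\varphi'\|_{L^\infty} |x-y|$ makes the integrand locally integrable with bound $\|\varphi'\|_{L^\infty}^2$, while on $\{|x-y| > 1\}$ the Schwartz decay of $\varphi$ (i.e.\ a bound of the form $|\varphi(x)| \ls (1+|x|)^{-N}$ for large $N$) yields integrability of $|\varphi(x)|^2 |x-y|^{-2}$ and its symmetric counterpart. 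The main obstacle is this last step: organising the decay estimate so that the resulting bound is explicitly a finite combination of Schwartz seminorms $p_{\alpha,\beta}(\varphi)$, thereby ensuring $\div_{1/2} T \in \mathcal{S}^{\prime}(\R)$ in the Fr\'echet sense. Everything else reduces to standard density and Cauchy-sequence arguments.
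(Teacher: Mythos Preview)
Your proposal is correct and follows essentially the same approach as the paper: the paper's argument, given in the discussion immediately preceding the lemma, amounts precisely to density of $\mathcal{S}_{od}(\R^2)$ in $L^{2}_{od}(\R^2)$ together with continuity of the inclusion $\mathcal{S}(\R) \hookrightarrow \dot{H}^{1/2}(\R)$, which is exactly what you verify. You have simply made explicit the Cauchy-sequence, uniqueness, linearity, and seminorm-bound steps that the paper leaves implicit in its general framework for the space $X$.
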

	
	Particular examples of such off-diagonal distributions $T$ shall now be discussed, with an emphasise to their connection with the fractional Laplacians.\\
	
	Namely, let us consider $u \in H^{s}(\R)$. By the previous subsection, we know that $d_s u$ in the distributional sense actually is actually a regular distribution with:
	$$\forall G \in \mathcal{S}_{od}(\R^2): \quad \langle d_s u, G \rangle = \int_{\R} \int_{\R} \frac{u(x) - u(y)}{| x-y |^s} G(x,y) \frac{dy dx}{| x-y |}$$
	We know that $d_s u$ with the pointwise formula above actually lies in $L^{2}_{od}(\R^2)$. As a result, we notice that Lemma \ref{extensionoffracdivtodist} applies in this case by using Cauchy-Schwarz inequality. Therefore, the distribution:
	$$\div_s d_s u \in \mathcal{S}'(\R),$$
	is well-defined by the approximation formula presented above in \eqref{divergencebyapproximationfordist}. This leads us to the following:
	\begin{align}
		\forall \varphi \in \mathcal{S}(\R): \langle \div_s d_s u, \varphi \rangle	&= \lim_{k \to \infty} \langle d_s u, G_k(\varphi) \rangle \notag \\
															&= \lim_{k \to \infty} \int_{\R} \int_{\R} \frac{u(x) - u(y)}{| x-y |^s} G_k(\varphi)(x,y) \frac{dy dx}{| x-y |} \notag \\
															&= \int_{\R} \int_{\R} \frac{u(x) - u(y)}{| x-y |^s} \frac{\varphi(x) - \varphi(y)}{| x-y |^s} \frac{dy dx}{| x-y |} \notag \\
															&= \int_{\R} (-\Delta)^{s/2} u(x) (-\Delta)^{s/2} \varphi(x) dx \notag \\
															&= \langle (-\Delta)^{s} u, \varphi \rangle,
	\end{align}
	i.e. we have that:
	\begin{equation}
		\div_s d_s u = (-\Delta)^{s} u,
	\end{equation}
	as tempered distributions. In particular, if $u \in H^{2s}(\R)$, then $\div_s d_s u$ agrees with the pointwise fractional Laplacian $(-\Delta)^s u$.

	  \section{Bourgain-Brezis Estimates for Fractional Gradients}
	  \label{bbest}
	  
	  Next, we shall focus on estimates that we are able to derive for tempered distributions, provided their fractional gradients are sufficiently integrable. The estimates obtained are reminiscent of the work in \cite{wett} of the author as well as his collaborators.
	  
	  \subsection{The $L^1$-estimate and fractional Sobolev embeddings}
	  
	  The goal of this subsection is to prove the following statement\footnote{For simplicity's sake, we shall formulate it here only for $n=1$. A general statement is supplied at the end of the current subsection.}:
	  
	  \begin{thm}
	  \label{l1est}
	  	Assume that $u \in \mathcal{S}^\prime(\R)$ is such that:
		$$d_{1/2} u = F \in L^{1}_{od}(\R^2) \subset \mathcal{S}^{\prime}_{od}(\R^2),$$
		then there exists a constant $C \in \R$ such that $u - C \in L^{2}(\R)$ together with the estimate:
		\begin{equation}
		\label{mainest}
			\| u - C \|_{L^2} \lesssim \| d_{1/2} u \|_{L^{1}_{od}}
		\end{equation}
	  \end{thm}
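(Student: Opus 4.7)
The plan is to combine the convolution machinery from Lemma \ref{convolution} and the identity \eqref{fracgradconv} with the classical endpoint fractional Sobolev embedding $\dot{W}^{1/2,1}(\R) \hookrightarrow L^{2}(\R)/\R$ applied to smooth approximants, and then to extract a limit by weak compactness in $L^{2}$.

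First, I would fix a standard mollifier $\varphi_{\epsilon}(x) = \epsilon^{-1}\varphi(x/\epsilon)$ with $\varphi \in C^{\infty}_{c}(\R)$ and $\int \varphi = 1$. Setting $u_{\epsilon} := \varphi_{\epsilon} \ast u$, we obtain a smooth, polynomially bounded function with $u_{\epsilon} \to u$ in $\mathcal{S}^{\prime}(\R)$. By the commutation identity \eqref{fracgradconv},
\[
d_{1/2} u_{\epsilon} = \varphi_{\epsilon} \ast d_{1/2} u = \varphi_{\epsilon} \ast F \in L^{1}_{od}(\R^{2}),
\]
and the Minkowski-type bound \eqref{generall1young} yields the key uniform control
\[
\| d_{1/2} u_{\epsilon} \|_{L^{1}_{od}} \leq \| \varphi_{\epsilon}\|_{L^{1}} \| F \|_{L^{1}_{od}} = \| F \|_{L^{1}_{od}}.
\]
For the smooth polynomially bounded $u_{\epsilon}$, repeating the computation \eqref{auxilliary} is legal (all integrals converge by Schwartz decay of $G$), so the distributional gradient is represented by the pointwise formula $(u_{\epsilon}(x) - u_{\epsilon}(y))/|x-y|^{1/2}$, which therefore lies in $L^{1}_{od}$ with the bound above.

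Second, I would invoke the critical fractional Sobolev embedding at $p=1$, namely
\[
\inf_{c \in \R} \| f - c \|_{L^{2}(\R)} \lesssim \int_{\R}\int_{\R} \frac{|f(x)-f(y)|}{|x-y|^{3/2}} \, dx \, dy = \| d_{1/2} f \|_{L^{1}_{od}},
\]
valid for smooth functions of moderate growth, to pick constants $C_{\epsilon} \in \R$ with
\[
\| u_{\epsilon} - C_{\epsilon} \|_{L^{2}(\R)} \lesssim \| F \|_{L^{1}_{od}}.
\]
The sequence $v_{\epsilon} := u_{\epsilon} - C_{\epsilon}$ is then uniformly bounded in $L^{2}(\R)$, so by Banach-Alaoglu there exists a subsequence $\epsilon_{k} \to 0$ and $v \in L^{2}(\R)$ with $v_{\epsilon_{k}} \rightharpoonup v$ weakly in $L^{2}$ and $\| v \|_{L^{2}} \leq \liminf \| v_{\epsilon_{k}} \|_{L^{2}} \lesssim \| F \|_{L^{1}_{od}}$.

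Third, I would pass to the limit in $\mathcal{S}^{\prime}$. Weak $L^{2}$-convergence implies convergence in $\mathcal{S}^{\prime}$, and we already know $u_{\epsilon_{k}} \to u$ in $\mathcal{S}^{\prime}$, hence $C_{\epsilon_{k}} = u_{\epsilon_{k}} - v_{\epsilon_{k}} \to u - v$ in $\mathcal{S}^{\prime}$. Testing against any $\psi \in \mathcal{S}(\R)$ with $\int \psi = 1$ identifies the scalar limit $C := \lim C_{\epsilon_{k}} = \langle u - v, \psi \rangle \in \R$; testing against a general $\phi \in \mathcal{S}(\R)$ then forces $u - v = C$ as distributions. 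Consequently $u = C + v$ with $v \in L^{2}(\R)$ and $\| v \|_{L^{2}} \lesssim \| F \|_{L^{1}_{od}}$, which is \eqref{mainest}.

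The main obstacle is the second step: the endpoint fractional Sobolev embedding $\dot{W}^{1/2,1}(\R) \hookrightarrow L^{2}(\R)/\R$. Away from $p=1$, this would be a direct consequence of Theorem \ref{schiwangthm1.4} and standard Triebel-Lizorkin embeddings; at $p=1$ the Fourier-analytic route breaks down and one must rely on the Gagliardo-type argument for the critical case, carefully applied to the smooth, only polynomially bounded functions $u_{\epsilon}$. Verifying that this embedding applies in our setup (rather than assuming $u_{\epsilon} \in L^{1}$ a priori) is the technical heart of the proof; everything else is bookkeeping via convolution and weak compactness.
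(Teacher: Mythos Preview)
Your outline matches the paper's approach exactly through the mollification, the commutation identity \eqref{fracgradconv}, the uniform $L^{1}_{od}$ bound, and the identification of the distributional gradient with the pointwise one for the smooth approximants $u_{\epsilon}$. The weak $L^{2}$ compactness and the extraction of the limiting constant are also handled the same way.

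The gap is precisely the one you flag: you invoke a global endpoint embedding $\inf_{c}\|f-c\|_{L^{2}(\R)}\lesssim\|d_{1/2}f\|_{L^{1}_{od}}$ for smooth, merely polynomially bounded $f$, but you do not justify it, and this is not something one can pull off the shelf for functions that are not a priori in $L^{1}$ or $L^{2}$. The paper does not assume such a global statement. Instead it localises: it applies the Bourgain--Brezis--Mironescu inequality \cite[Theorem~1]{bbm} on each interval $[-m,m]$ (where $u_{\epsilon}$ is bounded, so the hypothesis is met), using that the constant there is scale-invariant and hence independent of $m$. This gives $\|u_{\epsilon}-\overline{u_{\epsilon,m}}\|_{L^{2}([-m,m])}\leq C\|d_{1/2}u\|_{L^{1}_{od}}$ uniformly in $m$, and one then runs a \emph{second} weak-compactness argument in $m$ (in $L^{2}_{loc}$) to produce $w_{\epsilon}\in L^{2}(\R)$ and to show the averages $\overline{u_{\epsilon,m}}$ converge to a constant $\tilde{C}_{\epsilon}$. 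Only after this is done does one let $\epsilon\to 0$ as you do. In other words, the ``technical heart'' you leave open is resolved in the paper by a two-stage limiting procedure (first $m\to\infty$, then $\epsilon\to 0$), and the first stage is exactly what is missing from your proposal.
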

	  The main ingredient, besides a suitable approximation and appropriate choice of the notion of distribution (i.e. the one presented in the previous section), is the fractional Sobolev inequality which is proven with specific behaviour of the constants in limiting cases in Bourgain-Brezis-Mironescu \cite{bbm}, but which can also be found in various other references.\\
	  
	  Let now $u$ be as in Theorem \ref{l1est} and denote $d_{1/2}u = F \in L^{1}_{od}(\R^2)$. Using \eqref{fracgradconv}, we can see the following: We may define $\varphi_{\varepsilon}(x) := \frac{1}{\varepsilon} \varphi \big{(} \frac{x}{\varepsilon} \big{)}$ for some $\varphi \in \mathcal{S}(\R)$ and all $\varepsilon > 0$, such that:
	  $$\varphi(x) \geq 0, \quad \int_{\R} \varphi(x) dx = 1$$
	  If we introduce:
	  $$u_{\varepsilon} := \varphi_{\varepsilon} \ast u, \quad \forall \varepsilon > 0,$$
	  then by \eqref{fracgradconv}:
	  $$d_{1/2} u_{\varepsilon} = \varphi_{\varepsilon} \ast d_{1/2} u = \varphi_{\varepsilon} \ast F =: F_{\varepsilon}$$
	  Using \eqref{l1young}, we immediately obtain the uniform estimate:
	  \begin{equation}
	  \label{uniforml1est}
	  	\| d_{1/2} u_{\varepsilon} \|_{L^{1}_{od}} = \| F_{\varepsilon} \|_{L^{1}_{od}} \leq \| F \|_{L^{1}_{od}} = \| d_{1/2} u \|_{L^{1}_{od}}, \quad \forall \varepsilon > 0
	  \end{equation}
	  It is clear that $u_{\varepsilon}$ converges to $u$ in the sense of tempered distributions, if we let $\varepsilon \to 0$. This follows from the observation that for any $\psi \in \mathcal{S}(\R)$, we have:
	  $$\varphi_{\varepsilon}^{\#} \ast \psi \to \psi, \text{ as } \varepsilon \to 0,$$ 
	  in the Schwartz topology. Moreover, we shall verify that:
	  $$u_{\varepsilon} \in C^{\infty}(\R), \quad \forall \varepsilon > 0$$
	  In fact, we can deduce even more: Namely, we know that the convolution $u_{\varepsilon} = \varphi_{\varepsilon} \ast u$ grows at most polynomially, therefore it is a regular tempered distribution.\footnote{By regular we mean that the distribution is represented by a measureable function integrated against the inserted Schwartz function. We recall that for such distributions, the pointwise fractional gradient and the distributional fractional gradient agree under weak conditions, for example if $u$ is smooth and of polynomial growth as here, as seen in \eqref{auxilliary}.} Indeed, we have the following:
	  
	  \begin{lem}
	  	Let $u \in \mathcal{S}'(\R)$ be a tempered distribution and $\varphi \in \mathcal{S}(\R)$ be a Schwartz function. Then $\varphi \ast u$ is smooth and grows at most polynomially.
	  \end{lem}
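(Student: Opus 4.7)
My plan is to establish both smoothness and polynomial growth by working directly from the duality pairing and the standard quantitative continuity of tempered distributions on $\mathcal{S}(\R)$. First, I would rewrite $\varphi \ast u$ pointwise by setting, for each $x \in \R$, $\varphi_x(y) := \varphi(x-y)$, which lies in $\mathcal{S}(\R)$, and defining
$$(\varphi \ast u)(x) := \langle u, \varphi_x \rangle.$$
This is consistent with the distributional convolution, via the identity $\langle \varphi \ast u, \psi \rangle = \langle u, \varphi^{\#} \ast \psi \rangle$ and Fubini applied to the smooth function obtained below.

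For smoothness, the key observation is that the $\mathcal{S}(\R)$-valued map $x \mapsto \varphi_x$ is infinitely differentiable with derivative $x \mapsto -(\varphi')_{x}$. To see this, I would write a first-order Taylor expansion of $\varphi(x+h-y)$ in the $x$-variable and verify that the remainder, divided by $h$, tends to zero in every Schwartz seminorm $p_{\alpha,\beta}$ as $h \to 0$; this reduces to standard estimates since the derivatives of $\varphi$ are themselves Schwartz. Composing this smooth map with the continuous linear functional $u$ then yields $\varphi \ast u \in C^{\infty}(\R)$ together with the formula $\partial_x^k (\varphi \ast u)(x) = \langle u, (-1)^k (\partial^k \varphi)_{x} \rangle$.

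For polynomial growth, I would invoke the standard characterization that any $u \in \mathcal{S}'(\R)$ admits a continuity estimate
$$|\langle u, \psi \rangle| \leq C \, p_{N}(\psi), \quad \forall \psi \in \mathcal{S}(\R),$$
for some $N \in \mathbb{N}$ and $C > 0$ depending only on $u$. Applying this with $\psi = \varphi_x$ leads to the task of estimating
$$p_{N}(\varphi_x) = \sup_{|\alpha|,|\beta| \leq N} \, \sup_{y \in \R} \, \big| y^{\alpha} \, \partial_{y}^{\beta} \varphi(x-y) \big|.$$
After the change of variables $z = x-y$ and the binomial expansion $(x-z)^{\alpha} = \sum_{\gamma \leq \alpha} \binom{\alpha}{\gamma} x^{\alpha - \gamma} (-z)^{\gamma}$, each term is controlled by $|x|^{|\alpha| - |\gamma|}$ times a Schwartz seminorm of $\varphi$, which yields a bound of the form $(1 + |x|)^{N} \, p_{N}(\varphi)$ and hence polynomial growth of order at most $N$ for $\varphi \ast u$.

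There is no serious obstacle here, as this lemma is essentially textbook material on tempered distributions. The one mildly technical step is the verification that $x \mapsto \varphi_{x}$ is smooth as a map into $\mathcal{S}(\R)$, which requires checking that Taylor remainders converge to zero in every Schwartz seminorm simultaneously; everything else is bookkeeping with the binomial theorem and the finite-order continuity estimate for tempered distributions.
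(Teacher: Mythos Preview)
Your proposal is correct. Both your argument and the paper's establish polynomial growth in essentially the same way, by applying the finite-order continuity estimate $|\langle u,\psi\rangle|\leq C\,p_N(\psi)$ to $\psi=\varphi_x=\varphi(x-\cdot)$ and then controlling $p_N(\varphi_x)$ by $(1+|x|)^N$ times a fixed seminorm of $\varphi$; you use a binomial expansion, the paper uses the equivalent inequality $(1+|z|)^N\leq(1+|x-z|)^N(1+|x|)^N$.

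Where you genuinely differ is in the smoothness step. You show that $x\mapsto\varphi_x$ is $C^\infty$ as a map into the Fr\'echet space $\mathcal{S}(\R)$ and then compose with the continuous linear functional $u$; this is the standard textbook route. The paper instead proves $\varphi\ast u\in W^{k,\infty}_{loc}(\R)$ for every $k$ by a duality argument: it bounds $|\langle\varphi\ast u,\psi\rangle|$ for compactly supported $\psi$ by Schwartz seminorms of $\varphi^{\#}\ast\psi$, and then shows these seminorms are controlled by $\|\psi\|_{L^1}$ (with constants depending on the support), which gives local $L^\infty$ bounds on all distributional derivatives. Your approach is more direct and avoids the detour through weak derivatives; the paper's approach has the minor advantage of never needing to verify Fr\'echet-space differentiability of the translation map, trading it for an $L^1$--$L^\infty$ duality computation. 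Either route is perfectly adequate for this classical lemma.
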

	  
	  \begin{proof}
	  	To deduce smoothness, it suffices to show that $\varphi \ast u \in W^{k, \infty}_{loc}(\R)$ for any $k \in \mathbb{N}$. This can be easily done by checking that the distributional derivatives define locally bounded functions by the norm characterisation of $L^{\infty}$ using $L^{1}$-functions. To be precise, let us observe that for compactly supported, smooth $\psi$:
		$$\langle \varphi \ast u, \psi \rangle = \langle u, \varphi^{\#} \ast \psi \rangle$$
		The right hand side can be bounded by Schwartz seminorms of $\varphi^{\#} \ast \psi$. So it suffices to show that the Schwartz seminorms can be bounded by a constant depending on $\varphi$ multiplied by $\| \psi \|_{L^1}$ for all $\psi$ supported in a compact subset of $\R$, as this would imply that $\varphi \ast u \in L^{\infty}_{loc}$. The same argument applies to derivatives of the convolution, since the differentials may be absorbed in $\varphi$. More precisely, let us notice that for $\psi$ with support in $B_{r}(0)$:
		\begin{align}
			\forall N \in \mathbb{N}: |x|^{N} \Big{|} \int_{\R} \varphi(y-x) \psi(y) dy \Big{|}	&\leq \Big{|} \int_{B_r(0)} \varphi(y-x) \psi(y) (|y| + |y-x|)^{N} dy \Big{|} \notag \\
													&\lesssim \int_{B_{r}(0)} |(r + |y-x|)^N \varphi(y-x) | | \psi(y) | dy \notag \\
													&\lesssim \| \psi \|_{L^1},
		\end{align}
		where for the last inequality we used that $\varphi$ is Schwartz. Therefore, the corresponding smoothness is clear by estimation as outlined before.\\
		
		Moreover, we can see:
		$$| \varphi \ast u(x) | = \big{|} \langle u, \varphi(x - \cdot) \rangle \big{|} \lesssim p(\varphi(x- \cdot)),$$
		where $p$ is an appropriate Schwartz seminorm. Now notice:
		$$\forall N \in \mathbb{N}: (1 + | z |)^N | \partial_{x}^{m} \varphi(x - z) | \leq ( 1 + | x-z|)^N | \partial_{x}^{n} \varphi(x-z) | (1+|x|)^N,$$
		and applying these considerations to $p(\varphi(x- \cdot))$ yields the desired polynomial growth.
	  \end{proof}
	  We would like to emphasise that the very same proof works for arbitrary dimensions, i.e. for Schwartz functions and tempered distributions on $\R^n$. We shall formulate a corresponding result at the end of the current subsection.\\
	  
	  We shall make use of the fact that $u_{\varepsilon}$ represents a regular distribution. Namely, this leads to:
	  \begin{align}
	  \label{reductionueps}
	  	\langle d_{1/2} u_{\varepsilon}, G \rangle	&= \langle u_{\varepsilon}, \div_{1/2} G \rangle \notag \\
										&= \int_{\R} u_{\varepsilon}(x) \int_{\R} \frac{G(x,y) - G(y,x)}{| x-y |^{1/2}} \frac{dy}{| x-y |} dx \notag \\
										&= \int_{\R} \int_{\R} u_{\varepsilon}(x) \frac{G(x,y) - G(y,x)}{| x-y |^{1/2}} \frac{dy dx}{| x-y |} \notag \\
										&= \int_{\R} \int_{\R} u_{\varepsilon}(x) \frac{G(x,y)}{| x-y |^{1/2}} \frac{dy dx}{| x-y |} - \int_{\R} \int_{\R} u_{\varepsilon}(x) \frac{G(y,x)}{| x-y |^{1/2}} \frac{dy dx}{| x-y |}\notag \\
										&= \int_{\R} \int_{\R} u_{\varepsilon}(x) \frac{G(x,y)}{| x-y |^{1/2}} \frac{dy dx}{| x-y |} - \int_{\R} \int_{\R} u_{\varepsilon}(y) \frac{G(x,y)}{| x-y |^{1/2}} \frac{dx dy}{| x-y |}\notag \\
										&= \int_{\R} \int_{\R} \frac{u_{\varepsilon}(x) - u_{\varepsilon}(y)}{| x-y |^{1/2}} G(x,y) \frac{dy dx}{| x-y |} \notag \\
										&= \int_{\R} \int_{\R} d_{1/2} u_{\varepsilon}(x,y) G(x,y) \frac{dy dx}{| x-y |},
	  \end{align}
	  where in the last line, we used the pointwise definition of the fractional gradient given in Mazowiecka-Schikorra \cite[p.6]{schikorra}. Observe that all steps are justified by the rapid decay of the function $G$ as well as the vanishing of the derivatives along the diagonal.\\
	  
	  The main observation is now that \eqref{reductionueps} yields:
	  \begin{equation}
	  	\forall G \in \mathcal{S}_{od}(\R^2): \int_{\R} \int_{\R} F_{\varepsilon}(x,y) G(x,y) \frac{dy dx}{| x-y |} = \int_{\R} \int_{\R} d_{1/2} u_{\varepsilon}(x,y) G(x,y) \frac{dy dx}{| x-y |},
	  \end{equation}
	  which shows (by using compactly supported $G$ outside of the diagonal):
	  $$F_{\varepsilon} = d_{1/2} u_{\varepsilon}, \quad \forall \varepsilon > 0$$
	  This equality is now to be understood in a pointwise sense (almost everywhere) and no longer merely in the sense of distributions. As a result, we obtain for the pointwise fractional gradient of $u_{\varepsilon}$:
	  \begin{equation}
	  	\| F_{\varepsilon} \|_{L^{1}_{od}} = \| d_{1/2} u_{\varepsilon} \|_{L^{1}_{od}} = \int_{\R} \int_{\R} \frac{| u_{\varepsilon}(x) - u_{\varepsilon}(y) |}{ |x-y|^{\frac{3}{2}}} dx dy
	  \end{equation}
	  The expression on the right hand side of the equality corresponds to the Gagliardo-Sobolev inequality appearing, for example, in Bourgain-Brezis-Mironescu \cite[Theorem 1]{bbm}, if $d=1, s= 1/2, p=1$. Therefore, by the local boundedness of $u_{\varepsilon}$, the inequality in Bourgain-Brezis-Mironescu \cite[Theorem 1]{bbm} becomes applicable due to an approximation argument, leading to:
	  \begin{equation}
	  	\int_{-m}^{m} | u_{\varepsilon} - \overline{u_{\varepsilon,m}} |^2 dx \leq C \| d_{1/2} u_{\varepsilon} \|_{L^{1}_{od}}^2 \leq C \| d_{1/2} u \|_{L^{1}_{od}}^2 < \infty,
	  \end{equation}
	  where we denote by $\overline{u_{\varepsilon,m}}$ the average of $u_{\varepsilon}$ over the interval $[-m, m]$. Here, the constant $C > 0$ is independent of $m$ and $u$ by Remark 1 in \cite{bbm} using scale-invariance. For any $\varepsilon > 0$ and $m \in \mathbb{N}$, we define now:
	  \begin{equation}
	  \label{defveps}
	  	v_{\varepsilon,m} := u_{\varepsilon} - \overline{u_{\varepsilon, m}}
	  \end{equation}
	  For a fixed $\varepsilon > 0$, we may now deduce that this sequence converges weakly in $L^{2}_{loc}(\R)$ to some $w_{\varepsilon} \in L^{2}_{loc}(\R)$ due to the boundedness of $L^2$-norms over the compact intervals $[-m,m]$. Moreover, we can easily deduce by the weak convergence in $L^2([-m, m])$ and the lower semi-continuity of the norm:
	  $$\| w_{\varepsilon} \|_{L^2([-m,m])} \leq \liminf_{n \to \infty} \| v_{\varepsilon,m} \|_{L^2([-m,m])} \leq C \| d_{1/2} u \|_{L^{1}_{od}}$$
	  As the constant $C > 0$ is independent of $m$, we find by monotone convergence:
	  $$\| w_{\varepsilon} \|_{L^2} \leq C \| d_{1/2} u \|_{L^{1}_{od}}$$
	  Therefore, we now know $w_{\varepsilon} \in L^2(\R)$. Moreover, letting $\psi \in C^{\infty}_{c}(\R)$ and taking $N$ large enough, such that $\operatorname{supp} \psi \subset [-N/2,N/2]$, we see for any $m \geq N$:
	  $$\langle v_{\varepsilon,m}, \psi \rangle = \langle u_{\varepsilon}, \psi \rangle - \overline{u_{\varepsilon,m}} \int_{\R} \psi(x) dx$$
	  Since $v_{\varepsilon,m}$ converges weakly to $w_{\varepsilon}$, the right hand side of the equation converges to $\langle w_{\varepsilon}, \psi \rangle$. Since the first summand on the right hand side is independent of $m$, therefore the sequence $( \overline{u_{\varepsilon,m}} )_{m \in \mathbb{N}}$ must converge to some value $\tilde{C}_{\varepsilon} \in \R$. This immediately implies:
	  $$w_{\varepsilon} = u_{\varepsilon} - \tilde{C}_{\varepsilon} \in L^{2}(\R)$$
	  Repeating the same argument for $u_{\varepsilon} - \tilde{C}_{\varepsilon}$ and using distributional convergence $u_{\varepsilon} \to u$ shows that there is a constant $\tilde{C} \in \R$, such that $u - \tilde{C} \in L^{2}(\R)$ as well as:
	  \begin{equation}
	  	\| u - \tilde{C} \|_{L^2} \leq C \| d_{1/2} u \|_{L^{1}_{od}},
	  \end{equation}
	  which is the desired result. \qed \\
	  
	  One observes that we have the following immediate corollary analogous to the classical statement for the classical gradient:
	  \begin{cor}
	  \label{cor1}
	  	If $u \in \mathcal{S}^\prime (\R)$ has vanishing fractional $1/2$-gradient, then $u$ is constant.
	  \end{cor}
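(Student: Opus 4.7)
The plan is to deduce the corollary as a direct consequence of Theorem \ref{l1est}. If $d_{1/2} u = 0$ as an element of $\mathcal{S}^{\prime}_{od}(\R^2)$, then one may represent this distribution by the zero function $F \equiv 0 \in L^{1}_{od}(\R^2)$, which trivially satisfies $\| F \|_{L^{1}_{od}} = 0$. In particular, the hypothesis of Theorem \ref{l1est} is met.

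Invoking Theorem \ref{l1est}, there exists a constant $C \in \R$ such that $u - C \in L^{2}(\R)$ together with the estimate
\begin{equation*}
\| u - C \|_{L^2} \lesssim \| d_{1/2} u \|_{L^{1}_{od}} = 0.
\end{equation*}
Hence $u - C$ vanishes almost everywhere, which means that $u = C$ as elements of $\mathcal{S}^{\prime}(\R)$ (identifying the $L^2$-function $u - C$ with its associated regular tempered distribution). This yields the claim.

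There is no real obstacle here: the only point worth spelling out is that the identification of the distributional equation $d_{1/2} u = 0$ with the $L^{1}_{od}$-condition $F = 0$ is legitimate, and this is immediate because the zero distribution on $\mathcal{S}_{od}(\R^{2n})$ is represented by the zero function under the pairing introduced right after the definition of $L^{p}_{od}(\R^{2n})$. Every other step is purely a quantitative reading of Theorem \ref{l1est} in the degenerate case. One could alternatively proceed without invoking the full theorem by going through the proof and noting that $\varphi_{\varepsilon} \ast u$ has vanishing pointwise fractional $1/2$-gradient hence is locally constant (and, being smooth, constant), but the direct appeal to Theorem \ref{l1est} is by far the cleanest route.
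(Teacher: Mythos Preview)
Your proof is correct and matches the paper's approach exactly: the paper presents this as an immediate corollary of Theorem~\ref{l1est}, and your argument---applying the theorem with $F \equiv 0$ to conclude $\| u - C \|_{L^2} = 0$---is precisely the intended reasoning.
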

	  Moreover, all of the considerations in this subsection easily generalize to arbitrary dimension and thus $u \in \mathcal{S}'(\R^n)$. Therefore, we can conclude that the same result holds, if we replace $\R$ by $\R^n$ everywhere and change the weights in our integrals appropriately, invoking the results in our introductory section. Naturally, the application of Bourgain-Brezis-Mironescu \cite[Theorem 1]{bbm} necessitates differing H\"older exponents for the $L^{p}$-spaces than in the case treated above, but the argument remains valid as is, if we adapt them to the fractional Sobolev inequalities. For the reader's convenience, we state the result here:
	  
	  \begin{thm}
	  \label{generall1est}
	  	Let $p \geq 1, q \in ]1, +\infty[, s \in (0,1)$ with $sp < n$ be such that:
		$$\frac{1}{q} = \frac{1}{p} - \frac{s}{n}$$
		Moreover, let $u \in \mathcal{S}^{\prime}(\R^{n})$ be such that:
		$$d_{s} u \in L^{p}_{od}(\R^{2n})$$
		Then we have that $u - C_{u} \in L^{q}(\R^n)$ for some $C_{u} \in \R$ together with an estimate:
		\begin{equation}
		\label{generall1.2}
			\| u - C_{u} \|_{L^{q}} \lesssim \| d_{s} u \|_{L^{p}_{od}}
		\end{equation}
	  \end{thm}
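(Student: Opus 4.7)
The plan is to mimic the proof of Theorem \ref{l1est}, replacing the special exponents $n=1$, $p=1$, $s=1/2$, $q=2$ by the general ones in the statement, and invoking the Bourgain-Brezis-Mironescu fractional Sobolev inequality in its full generality as stated in \cite{bbm}. The strategy splits into a regularisation step, a pointwise-gradient identification, an application of the fractional Sobolev inequality on expanding balls, and two successive passes to the limit (first $m \to \infty$, then $\varepsilon \to 0$).

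First, I pick a nonnegative mollifier $\varphi \in \mathcal{S}(\R^n)$ with $\int \varphi = 1$, set $\varphi_{\varepsilon}(x) := \varepsilon^{-n} \varphi(x/\varepsilon)$, and define $u_{\varepsilon} := \varphi_{\varepsilon} \ast u$. By the smoothing lemma established in the proof of Theorem \ref{l1est}, $u_{\varepsilon}$ is smooth and of at most polynomial growth, so it is a regular tempered distribution. The commutation identity \eqref{fracgradconv} and the Young-type bound \eqref{generall1young} yield
\[
d_{s} u_{\varepsilon} = \varphi_{\varepsilon} \ast d_{s} u =: F_{\varepsilon}, \qquad \|F_{\varepsilon}\|_{L^{p}_{od}} \leq \|F\|_{L^{p}_{od}}, \quad \forall \varepsilon > 0.
\]
The computation in \eqref{reductionueps}, which depends only on $u_{\varepsilon}$ being smooth of polynomial growth and on the properties of $\mathcal{S}_{od}(\R^{2n})$, then identifies $F_{\varepsilon}(x,y)$ almost everywhere with the Mazowiecka-Schikorra pointwise fractional gradient $(u_{\varepsilon}(x) - u_{\varepsilon}(y))/|x-y|^{s}$.

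Second, I would apply Bourgain-Brezis-Mironescu \cite[Theorem 1]{bbm} on the ball $B_{m} \subset \R^{n}$ with the given exponents: for $\overline{u_{\varepsilon, m}}$ the average of $u_{\varepsilon}$ over $B_{m}$,
\[
\|u_{\varepsilon} - \overline{u_{\varepsilon, m}}\|_{L^{q}(B_{m})} \lesssim \|d_{s} u_{\varepsilon}\|_{L^{p}_{od}} \leq \|F\|_{L^{p}_{od}},
\]
with a constant independent of $m$ and $\varepsilon$ by the scale invariance observed in \cite[Remark 1]{bbm}. Since $q \in (1,\infty)$, $L^{q}$ is reflexive; extracting a weakly convergent subsequence as $m \to \infty$ and invoking monotone convergence exactly as in the proof of Theorem \ref{l1est} produces $w_{\varepsilon} \in L^{q}(\R^{n})$ with $\|w_{\varepsilon}\|_{L^{q}} \lesssim \|F\|_{L^{p}_{od}}$. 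Testing against $\psi \in C^\infty_c(\R^n)$ and using that $\langle u_{\varepsilon}, \psi \rangle$ is independent of $m$ forces $\overline{u_{\varepsilon, m}}$ to converge to some $\tilde{C}_{\varepsilon} \in \R$, so $u_{\varepsilon} - \tilde{C}_{\varepsilon} = w_{\varepsilon} \in L^{q}(\R^{n})$ with an $\varepsilon$-uniform bound. Finally, I pass to the limit $\varepsilon \to 0$ by reflexivity of $L^{q}$ together with distributional convergence $u_{\varepsilon} \to u$ to obtain a single constant $C_{u} \in \R$ with $u - C_{u} \in L^{q}(\R^{n})$ satisfying \eqref{generall1.2}.

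The main obstacle I expect is compatibility of the two successive weak limits: I must verify that the constants $\tilde{C}_{\varepsilon}$ produced in the $m$-limit remain sufficiently well-controlled to survive the $\varepsilon$-limit (rather than drifting to $\pm \infty$), and that the resulting $L^{q}$-limit of $u_{\varepsilon} - \tilde{C}_{\varepsilon}$ is genuinely the distribution $u$ shifted by a single constant. This is the same bookkeeping already carried out in the $p=1$, $q=2$ case, and by the $C^{\infty}_{c}$-testing trick used there it transfers without essential change, the only genuinely new ingredient being the invocation of the general Bourgain-Brezis-Mironescu inequality instead of its $L^{2}$-specialisation.
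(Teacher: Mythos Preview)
Your proposal is correct and follows essentially the same strategy as the paper: mollify, identify the distributional and pointwise fractional gradients, apply a scale-invariant fractional Sobolev/Poincar\'e inequality on expanding domains, and pass to the limit twice via reflexivity of $L^{q}$. The only notable difference is that where you invoke Bourgain--Brezis--Mironescu \cite[Theorem 1, Remark 1]{bbm} on balls (exactly as in the proof of Theorem~\ref{l1est}), the paper's proof of Theorem~\ref{generall1est} instead cites Nezza--Palatucci--Valdinoci \cite[Theorem 6.7]{hitchhiker} on cubes and supplies a short contradiction argument to upgrade it to a scale-invariant Poincar\'e inequality; the remaining steps are identical.
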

	  
	  \begin{proof}
	  	Firstly, as before, using a mollifying kernel $\varphi \in \mathcal{S}(\R^n)$, we can assume wlog that $u$ is a smooth function of polynomial growth. Using \eqref{generall1young}, we can see that the $L^{p}_{od}(\R^{2n})$-bounds can be chosen uniformly as before, therefore the result could then be obtained by weak convergence due to the reflexivity of $L^{q}(\R)$. Consequently, we can deduce the following by the same sequence of inequalities as in \eqref{reductionueps} up to obvious modifications:
		$$\iint \frac{| u(x) - u(y) |^{p}}{| x - y |^{n + sp}} dx dy = \| d_{s} u \|_{L^{p}_{od}}^{p} < + \infty$$
		Let us recall the following result in Nezza-Palatucci-Valdinocci \cite[Theorem 6.7]{hitchhiker}:
		
		\begin{thm}
		\label{poincarefrac}
			Let $s \in (0,1)$ and $p \in [1, + \infty[$ such that $sp < n$ and define $q$ as above. Then there is a positive constant $C = C(n,p,s, Q) > 0$, $Q$ being a cube in $\R^n$, such that for any $f \in W^{s,p}(Q)$, we have:
			$$\| f \|_{L^{q}(Q)} \leq C \Big{(} \| f \|_{L^{p}(Q)} + \big{(} \iint_{Q \times Q} \frac{| f(x) - f(y) |^{p}}{| x-y |^{n+sp}} dx dy \big{)}^{1/p} \Big{)}$$
			Moreover, there exists a $\tilde{C} = \tilde{C}(n,p,s) > 0$ independent of the cube $Q$, such that:
			$$\big{\|} f - \dashint_{Q} f dx \big{\|}_{L^{q}(Q)} \leq \tilde{C} \Big{(} \iint_{Q \times Q} \frac{| f(x) - f(y) |^{p}}{| x-y |^{n+sp}} dx dy \Big{)}^{1/p}$$
		\end{thm}
		
		The first half of the statement is contained in Theorem 6.7 in Nezza-Palatucci-Valdinocci \cite{hitchhiker}, as cubes are Lipschitz domains. The second part can be proven by contradiction as for the classical Poincar\'e inequality: Observe that by scaling and translations, it suffices to consider the unit cube at the origin and then assume that $(f_k)$ is a sequence with $\dashint_{Q} f_k dx = 0$, such that $\| f_k \|_{W^{s,p}(Q)} = 1$, thus by the first half of the result:
		$$\| f_k \|_{L^{q}(Q)} \leq C$$
		Moreover, we may assume that:
		$$\big{\|} f_k \big{\|}_{L^{q}(Q)} \geq k \cdot \Big{(} \iint_{Q \times Q} \frac{| f_{k}(x) - f_{k}(y) |^{p}}{| x-y |^{n+sp}} dx dy \Big{)}^{1/p}$$
		This shows due to the $L^{q}$-bounds:
		\begin{equation}
		\label{eq001}
			[ f_k ]_{W^{s,p}(Q)} \leq \frac{C}{k} \to 0, \quad \text{ as } k \to \infty
		\end{equation}
		Extracting a weakly convergent subsequence, we may assume that $f_k \to f$ weakly in $L^{q}$. By using $G \in \mathcal{S}_{od}(\R^{2n})$, it follows:
		$$[ f ]_{W^{s,p}(Q)} = 0,$$
		thus $f$ is constant. This is a simple consequence from observing that $(f_k (x) - f_k (y))/| x-y |^s$ is bounded in $L^{p}_{od}(Q \times Q)$ (the notation is to be understood in the natural way) and even converges to $0$ in norm. Therefore, $d_s f_k \to 0$ in norm, but by the duality established, this means by evaluating on $G \in \mathcal{S}_{od}(\R^{2n})$ with support in $Q \times Q$:
		$$d_s f = 0 \quad \text{ on } Q \times Q$$ 
		The conclusion now follows by noticing that $d_s f$ can be understood in the pointwise sense due to convergence of the integrals involved in the Gagliardo-Sobolev seminorm for $f$ on appropriate subsets of $Q \times Q$ avoiding the diagonal and using an approximation by compactly supported off-diagonal Schwartz functions.\\
		
		However, as we may also assume by Theorem 7.1 in Nezza-Palatucci-Valdinocci \cite{hitchhiker} that $f_k$ converges to $f$ strongly in $L^p$, we also have:
		$$\dashint_{Q} f dx = 0$$
		This implies that $f = 0$. This yields the desired contradiction, as $\| f_k \|_{L^{p}(Q)}$ is bounded from below due to us being able to assume a priori that $\| f_{k} \|_{W^{s,p}(Q)} = 1$ for all $k \in \mathbb{N}$. Thus, we have proven the fractional Poincar\'e inequality.\\
		
		The proof can now be completed as before applying Theorem \ref{poincarefrac} on cubes and exploiting $L^{q}_{loc}(\R^n)$-weakly convergent subsequences.
	  \end{proof}
	  
	  Naturally, Corollary \ref{cor1} has the following generalisation due to $1 \cdot s = s < n$ for all $s \in (0,1)$ and $n \in \mathbb{N}$:
	  
	  \begin{cor}
	  \label{cor2}
	  	If $u \in \mathcal{S}^\prime (\R^n)$ has vanishing fractional $s$-gradient for some $s \in (0,1)$, then $u$ is constant.
	  \end{cor}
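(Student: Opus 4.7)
The plan is to invoke Theorem \ref{generall1est} directly with the exponent choice $p=1$. For any $s\in(0,1)$ and any integer $n\geq 1$, we have $sp=s<1\leq n$, so the hypothesis $sp<n$ of that theorem is satisfied. The associated Sobolev exponent is
$$q=\frac{n}{n-s}\in\,]1,+\infty[,$$
so $q$ lies in the admissible range required by the theorem.

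Next, the hypothesis that $u$ has vanishing fractional $s$-gradient means $d_s u=0$ as an element of $\mathcal{S}'_{od}(\R^{2n})$. In particular, the zero function represents $d_s u$, and the zero function trivially belongs to $L^{1}_{od}(\R^{2n})$ with $\|d_s u\|_{L^{1}_{od}}=0$. Hence the assumption of Theorem \ref{generall1est} is verified.

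Applying Theorem \ref{generall1est} then yields a constant $C_u\in\R$ such that $u-C_u\in L^q(\R^n)$ together with the estimate
$$\|u-C_u\|_{L^q}\lesssim\|d_s u\|_{L^{1}_{od}}=0.$$
Therefore $u-C_u=0$ almost everywhere, and since $u-C_u$ is the tempered distribution represented by this vanishing $L^q$ function, we conclude $u=C_u$ as elements of $\mathcal{S}'(\R^n)$, which is the desired statement.

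There is no real obstacle here beyond verifying that the exponent choice $p=1$ is admissible; the only mild point is to note that $q=n/(n-s)>1$ so that $q$ indeed lies in the open interval $]1,+\infty[$ required in the hypothesis of Theorem \ref{generall1est}, which is automatic since $s>0$.
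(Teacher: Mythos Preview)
Your proposal is correct and matches the paper's own reasoning exactly: the paper simply notes that Corollary~\ref{cor2} is the natural generalisation of Corollary~\ref{cor1} ``due to $1\cdot s = s < n$ for all $s\in(0,1)$ and $n\in\mathbb{N}$'', i.e.\ precisely the application of Theorem~\ref{generall1est} with $p=1$ that you carry out.
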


	  \subsection{The $H^{-1/2}$-estimate and a Survey of Definitions}
	  
	  The goal of this subsection is to study some spaces appropriate for Bourgain-Brezis type estimates in the spirit of \cite{wett}. The most obvious space is, however, also the least interesting one. So while we immediately obtain the desired result, there are no new ideas involved in the proof. More intriguing choices are then considered and we attempt to see whether they are suitable or not for our purposes.
	  
	  \subsubsection{A first Definition of $H^{-1/2}_{od}(\R^2)$}
	  First, we would like to obtain a result of the following form:
	  $$d_{1/2} u \in H^{-1/2}_{od}(\R^2)\ \Rightarrow\ u \in L^{2}(\R)$$
	  This is in fact the kind of behaviour we would expect from a suitable generalisation of a negative exponent Sobolev space to our new distributional setup. In order to achieve this, we first have to make sense of the space $H^{-1/2}_{od}(\R^2)$. A natural definition would be:
	  \begin{defi}
	  \label{d1/2image}
	  	We define:
		\begin{equation}
		\label{defh1/2}
			H^{-1/2}_{od}(\R^2) := \big{\{} U \in \mathcal{S}^{\prime}_{od}(\R^2)\ \big{|}\ U = d_{1/2} u \text{ for some } u \in L^{2}(\R) \subset \mathcal{S}^{\prime}(\R)  \big{\}} = \text{Im } d_{1/2}
		\end{equation}
		Moreover, the norm is defined by:
		$$\| U \|_{H^{-1/2}_{od}} = \| d_{1/2} u \|_{H^{-1/2}_{od}} := \| u \|_{L^2}$$
	  \end{defi}
	  The definition is inspired by the dual of the homogeneous Sobolev spaces which can be characterised as sums of distributional partial derivatives of functions of low regularity. A characterisation of this space based merely on $\mathcal{S}^{\prime}_{od}(\R^2)$ might exist. We would like to emphasise at this point already, that this definition of $H^{-1/2}_{od}(R^2)$ essentially will render the generalised Bourgain-Brezis inequality a trivial corollary to our consideration in the previous subsection for fractional gradients in $L^{1}_{od}$. This is in stark contrast to the situation encounter in the local case and is likely due to the assumptions on $H^{-1/2}_{od}(\R^2)$ being too strong. Therefore, we shall explore some relaxations later on in the current subsection.\\
	  
	  The following statement is now obvious by what we have done before:
	  
	  \begin{thm}
	  \label{h1/2est}
	  	Assume that $u \in \mathcal{S}^\prime(\R)$ is such that:
		$$d_{1/2} u = U \in H^{-1/2}_{od}(\R^2) \subset \mathcal{S}^{\prime}_{od}(\R^2),$$
		then there exists a constant $C \in \R$ such that $u - C \in L^{2}(\R)$ together with the estimate:
		\begin{equation}
		\label{mainest}
			\| u - C \|_{L^2} = \| d_{1/2} u \|_{H^{-1/2}_{od}}
		\end{equation}
	  \end{thm}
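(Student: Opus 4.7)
The plan is to exploit the fact that $H^{-1/2}_{od}(\R^2)$ is defined tautologically as the image of $d_{1/2}$ on $L^2(\R)$, so a preimage of the specified form is automatically available. Given $u \in \mathcal{S}'(\R)$ with $d_{1/2}u = U \in H^{-1/2}_{od}(\R^2)$, Definition \ref{d1/2image} supplies some $v \in L^2(\R)$ with $d_{1/2}v = U$. By linearity of the distributional fractional gradient (which is immediate from Definition \ref{definitionfracgradondist}), I would then form the difference $u - v \in \mathcal{S}'(\R)$ and observe that $d_{1/2}(u - v) = d_{1/2}u - d_{1/2}v = U - U = 0$ in $\mathcal{S}^{\prime}_{od}(\R^2)$.

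The next step is to invoke Corollary \ref{cor1}, which says that a tempered distribution with vanishing fractional $1/2$-gradient is a constant. Applied to $u - v$, this gives $u - v = C$ for some $C \in \R$, hence $u - C = v \in L^2(\R)$, which is precisely the first assertion.

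For the norm identity, I would first verify that the norm in Definition \ref{d1/2image} is well-posed: if $v_1, v_2 \in L^2(\R)$ both represent $U$, then $d_{1/2}(v_1 - v_2) = 0$, so by Corollary \ref{cor1} again, $v_1 - v_2$ is a constant; since the only constant lying in $L^2(\R)$ is zero, one gets $v_1 = v_2$ and the assignment $U \mapsto \|v\|_{L^2}$ is unambiguous. With this in hand, the identity chain $\|u - C\|_{L^2} = \|v\|_{L^2} = \|d_{1/2}v\|_{H^{-1/2}_{od}} = \|U\|_{H^{-1/2}_{od}} = \|d_{1/2}u\|_{H^{-1/2}_{od}}$ closes the argument.

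There is essentially no obstacle to overcome, which is precisely the reason the author flagged this definition as "the least interesting one": once one accepts Definition \ref{d1/2image}, the theorem reduces to the injectivity of $d_{1/2}$ modulo constants, a property already secured by Corollary \ref{cor1}. The substantive content of a Bourgain-Brezis type statement is therefore shifted entirely into the problem of describing $H^{-1/2}_{od}(\R^2)$ intrinsically, which motivates the relaxations the author promises to explore in the subsequent parts of the subsection.
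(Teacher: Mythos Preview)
Your proposal is correct and follows essentially the same approach as the paper: take $v \in L^2(\R)$ with $d_{1/2}v = U$ from the definition, apply Corollary \ref{cor1} to $u-v$ to obtain the constant, and read off the norm identity. The only addition is your explicit check that the norm in Definition \ref{d1/2image} is well-posed, which the paper leaves implicit.
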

	  The proof merely consists of noticing that by definition, there exists a $v \in L^{2}(\R)$ such that:
	  $$d_{1/2} v = d_{1/2} u$$
	  Therefore:
	  $$d_{1/2} (u-v) = 0,$$
	  and Corollary \ref{cor1} immediately shows that $u-v$ is a constant. Thus, we can deduce:
	  $$\exists C \in \R: u = v+C,$$
	  and consequently:
	  $$\| u-C \|_{L^2} = \| v \|_{L^2} = \| U \|_{H^{-1/2}_{od}},$$
	  which is the desired result. \qed \\
	  
	  Once again, there is no need to restrict to the $1$-dimensional case, provided we define in an analogous manner Sobolev spaces with negative exponents for arbitrary H\"older exponents as above. This generalisation shall be treated later on.\\
	  
	  To render the previous result slightly more interesting, let us provide another characterisation of the space $H^{-1/2}_{od}(\R^2)$ concealing the immediate connection to $L^{2}$-fractional gradients and replacing it by a continuity property that is more in line with the type of characterisation one might expect:
	  
	  \begin{lem}
	  \label{altcharacterisationh1/2}
	  	Let $U \in \mathcal{S}^{\prime}_{od}(\R^2)$ be any off-diagonal distribution. Then $U \in H^{-1/2}_{od}(\R^2)$ if and only if:
		\begin{equation}
		\label{altcharah12}
			| \langle U, G \rangle | \leq C \| \div_{1/2} G \|_{L^{2}}, \quad \forall G \in \mathcal{S}_{od}(\R^2),
		\end{equation}
		for some $C > 0$, a suitable constant independent of $G$. Moreover, the smallest such constant $C$ actually agrees with $\| U \|_{H^{-1/2}_{od}}$.
	  \end{lem}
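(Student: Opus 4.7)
The plan is to treat this as a straightforward duality/Hahn--Banach argument, leveraging the very definition of $H^{-1/2}_{od}(\R^2)$ in Definition \ref{d1/2image} as the image $d_{1/2}(L^2(\R))$. The forward direction will reduce to Cauchy--Schwarz, while the converse will construct the desired $u \in L^2(\R)$ from a bounded linear functional on a subspace of $L^2(\R)$ via Hahn--Banach and Riesz representation.

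For the forward implication, suppose $U \in H^{-1/2}_{od}(\R^2)$, so $U = d_{1/2} u$ for some $u \in L^2(\R)$ with $\|u\|_{L^2} = \|U\|_{H^{-1/2}_{od}}$. Then by Definition \ref{definitionfracgradondist} and Cauchy--Schwarz,
$$|\langle U, G \rangle| = |\langle u, \div_{1/2} G \rangle| \leq \|u\|_{L^2} \|\div_{1/2} G\|_{L^2} = \|U\|_{H^{-1/2}_{od}} \|\div_{1/2} G\|_{L^2},$$
for every $G \in \mathcal{S}_{od}(\R^2)$. This simultaneously verifies \eqref{altcharah12} and shows that the smallest admissible constant $C_0$ satisfies $C_0 \leq \|U\|_{H^{-1/2}_{od}}$.

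For the converse, assume \eqref{altcharah12} with smallest constant $C_0$. I would consider the subspace $V := \div_{1/2}(\mathcal{S}_{od}(\R^2)) \subset L^2(\R)$, which is well-defined by Lemma \ref{fracdiv}, and define the linear functional $\ell: V \to \C$ by $\ell(\div_{1/2} G) := \langle U, G \rangle$. The assumed estimate plays a dual role here: applied to $G_1 - G_2$ with $\div_{1/2} G_1 = \div_{1/2} G_2$ it shows $\ell$ is \emph{well-defined} (since $|\langle U, G_1 \rangle - \langle U, G_2 \rangle| \leq C_0 \cdot 0 = 0$), and applied to a general $G$ it shows $\ell$ is bounded on $V$ with norm at most $C_0$. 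I would then extend $\ell$ to a bounded linear functional on all of $L^2(\R)$ by Hahn--Banach without increasing the norm, and invoke the Riesz representation theorem to produce $u \in L^2(\R)$ with $\|u\|_{L^2} \leq C_0$ such that $\ell(\psi) = \langle u, \psi \rangle$ for every $\psi \in L^2(\R)$. Specialising to $\psi = \div_{1/2} G$ gives
$$\langle U, G \rangle = \langle u, \div_{1/2} G \rangle = \langle d_{1/2} u, G \rangle, \quad \forall G \in \mathcal{S}_{od}(\R^2),$$
so $U = d_{1/2} u$ as off-diagonal distributions; hence $U \in H^{-1/2}_{od}(\R^2)$ with $\|U\|_{H^{-1/2}_{od}} \leq \|u\|_{L^2} \leq C_0$. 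Combined with the forward inequality, this yields $C_0 = \|U\|_{H^{-1/2}_{od}}$.

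There is no real obstacle in the argument: the only potentially subtle point is well-definedness of $\ell$ on $V$, and this is handed to us for free by the estimate itself. In particular, no density of $V$ in $L^2(\R)$ is required, because Hahn--Banach provides the extension unconditionally. Conceptually, the statement is simply the assertion that $d_{1/2}: L^2(\R) \to \mathcal{S}'_{od}(\R^2)$ is the transpose of $\div_{1/2}: \mathcal{S}_{od}(\R^2) \to L^2(\R)$, and the norm equality reflects the fact that Hahn--Banach preserves operator norms.
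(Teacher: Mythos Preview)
Your proof is correct and follows essentially the same approach as the paper: Cauchy--Schwarz (H\"older) for the forward direction, and for the converse the identical Hahn--Banach/Riesz construction on the subspace $\div_{1/2}(\mathcal{S}_{od}(\R^2)) \subset L^2(\R)$, with well-definedness coming from the estimate applied to differences. The only cosmetic difference is that you write $\|U\|_{H^{-1/2}_{od}} \leq \|u\|_{L^2}$ where the paper (and Definition \ref{d1/2image}) has equality by fiat, but your two-sided squeeze still yields the norm identification.
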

	  
	  \begin{proof}
	  	Let us observe that if $U \in H^{-1/2}_{od}(\R^2)$, then by definition:
		$$d_{1/2} u = U,$$
		for some $L^{2}$-function $u$ on $\R$. Therefore, by duality:
		\begin{align}
			| \langle U, G \rangle |	&= | \langle d_{1/2} u, G \rangle | \notag \\
								&= | \langle u, \div_{1/2} G \rangle | \notag \\
								&\leq \| u \|_{L^2} \| \div_{1/2} G \|_{L^2},
		\end{align}
		where we used H\"older's inequality in the last step. Therefore, the estimate in \eqref{altcharah12} holds with $C = \| u \|_{L^2}$.\\
		
		Let us now assume the converse, namely:
		$$\forall G \in \mathcal{S}_{od}(\R^2): | \langle U, G \rangle | \leq C \| \div_{1/2} G \|_{L^2}$$
		We observe that this implies that the following operator:
		$$L_{U}: \big{\{} \div_{1/2} G\ |\ G \in \mathcal{S}_{od}(\R^2) \big{\}} \to \R, \quad \div_{1/2} G \mapsto \langle U, G \rangle,$$
		is well-defined, linear and continuous with respect to the $L^{2}$-norm. Indeed, if $\div_{1/2} G = \div_{1/2} \tilde{G}$ for $G, \tilde{G} \in \mathcal{S}_{od}(\R^2)$, then we know:
		$$| \langle U, G \rangle - \langle U, \tilde{G} \rangle | = | \langle U, G - \tilde{G} \rangle | \leq C \| \div_{1/2} (G - \tilde{G} ) \|_{L^2} = 0$$
		Furthermore, linearity is obvious and the continuity with respect to the $L^2$-norm is exactly the assumption on $U$ above. Therefore, by Hahn-Banach's extension result and Riesz' representation theorem, this means that we can extend $L_{U}$ to a continuous linear functional on all of $L^{2}(\R)$ and consequently that there is a $L^2$-function $u$, such that:
		$$\langle u, \div_{1/2} G \rangle = \langle U, G \rangle, \quad \forall G \in \mathcal{S}_{od}(\R^2),$$
		and as a result, using the dual definition of the fractional gradient:
		$$d_{1/2} u = U,$$
		where equality holds in the sense of off-diagonal distributions. Observe that the norm of $U$ in $H^{-1/2}_{od}(\R^2)$ can also be characterised either as the $L^2$-norm of $u \in L^{2}(\R)$ with $d_{1/2} u$ or as the norm of the induced linear functional $L_{U}$ above. This follows, as the Hahn-Banach extension can be chosen to satisfy the same norm properties as $L_{U}$ and thus $\| u \|_{L^{2}}$ actually coincides with the operator norm.
	  \end{proof}
	  
	  This computation naturally suggests the following definition by duality: The space $\dot{H}^{1/2}_{od}(\R^2)$ is the completion of the space $\mathcal{S}_{od}(\R^2)$ under the semi-norm:
	  $$\| G \|_{\dot{H}^{1/2}_{od}} := \| \div_{1/2} G \|_{L^2} = \Big{(} \int_{\R} \big{|} \int_{\R} \frac{2 G(x,y)}{| x-y |^{3/2}} dy \big{|}^{2} dx\Big{)}^{1/2}$$
	  To further examine this space, a nice characterisation of the $G$ with vanishing $1/2$-divergence is necessary. One should note that the definition of $\dot{H}^{1/2}_{od}(\R^2)$ and $H^{-1/2}_{od}(\R^2)$ naturally generalize for $s \in (0,1)$ as follows:
	  \begin{equation}
	  	W^{-s,p}_{od}(\R^{2n}) := \big{\{} U \in \mathcal{S}^{\prime}_{od}(\R^{2n})\ \big{|}\ U = d_{s} u \text{ for some } u \in L^{p}(\R^n) \big{\}}
	  \end{equation}
	  together with the norm:
	  \begin{equation}
	  	\forall U \in W^{-s,p}_{od}(\R^{2n}): \| U \|_{W^{-s,p}_{od}} := \| u \|_{L^{p}},
	  \end{equation}
	  for $u \in L^{p}(\R^n)$ as above. The norm is well-defined due to Corollary \ref{cor2}. Similarily, we can define $\dot{W}^{s,p}_{od}(\R^{2n})$ in analogy to $\dot{H}^{1/2}_{od}(\R^2)$. We observe that an alternative characterisation in the spirit of Lemma \ref{altcharacterisationh1/2} can be obtained using duality of $L^p$-spaces.\\
	  
	  The following result can be obtained completely analogous to Theorem \ref{h1/2est}:
	  
	  \begin{thm}
	  \label{wspest}
	  	Assume that $u \in \mathcal{S}^\prime(\R^n)$ is such that:
		$$d_{s} u = U \in W^{-s,p}_{od}(\R^2) \subset \mathcal{S}^{\prime}_{od}(\R^{2n}),$$
		for some $s \in (0,1)$ and $p \in [1,\infty[$, then there exists a constant $C \in \R$ such that $u - C \in L^{p}(\R^n)$ together with the estimate:
		\begin{equation}
		\label{mainest.2}
			\| u - C \|_{L^p} = \| d_{s} u \|_{W^{-s,p}_{od}}
		\end{equation}
	  \end{thm}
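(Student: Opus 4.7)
The plan is to mirror the argument of Theorem \ref{h1/2est} almost verbatim, since the content of Theorem \ref{wspest} is essentially that Corollary \ref{cor2} combined with the definition of $W^{-s,p}_{od}(\R^{2n})$ makes the claim a tautology once the norm is shown to be well-defined. Concretely, from $U \in W^{-s,p}_{od}(\R^{2n})$ I invoke the defining property \eqref{defh1/2} (in its generalised form) to obtain some $v \in L^p(\R^n)$ with $d_s v = U$. By assumption we also have $d_s u = U$, so by linearity of the distributional fractional gradient $d_s(u - v) = 0$ in $\mathcal{S}^{\prime}_{od}(\R^{2n})$.

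At this point Corollary \ref{cor2} applies directly and yields that $u - v$ is a constant $C \in \R$. Hence $u - C = v \in L^p(\R^n)$, which already gives the qualitative conclusion. The identity $\|u - C\|_{L^p} = \|d_s u\|_{W^{-s,p}_{od}}$ is then simply a rereading of the definition of the norm on $W^{-s,p}_{od}(\R^{2n})$ applied to the unique $L^p$-representative $v = u - C$.

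The only step requiring a moment of care is to verify that the norm on $W^{-s,p}_{od}(\R^{2n})$ is indeed well-defined, i.e.\ that the witness $v \in L^p(\R^n)$ with $d_s v = U$ is unique. If $v_1, v_2 \in L^p(\R^n)$ both satisfy $d_s v_i = U$, then $d_s(v_1 - v_2) = 0$, so by Corollary \ref{cor2} again $v_1 - v_2$ is a constant; but since $p < \infty$ and the underlying space is $\R^n$, the only constant in $L^p(\R^n)$ is zero, so $v_1 = v_2$. This is the sole non-cosmetic point in the proof, and it is immediate. I expect no genuine obstacle: the real mathematical content has already been absorbed into Corollary \ref{cor2} (equivalently, into Theorem \ref{generall1est}), so Theorem \ref{wspest} reduces to bookkeeping, exactly as the author forewarned in the commentary preceding Theorem \ref{h1/2est}.
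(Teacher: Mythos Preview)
Your argument is correct and matches the paper's proof essentially verbatim: the paper states that the proof is exactly the same as for Theorem~\ref{h1/2est}, replacing Corollary~\ref{cor1} by Corollary~\ref{cor2}, which is precisely what you do. Your additional remark on well-definedness of the norm is also in line with the paper's earlier observation that the norm is well-defined due to Corollary~\ref{cor2}.
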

	  
	  The proof is exactly the same as for Theorem \ref{h1/2est}, except for the use of Corollary \ref{cor2} instead of Corollary \ref{cor1}.\\

	  \subsubsection{A (not so successful) Variation of the Definition}
	  
	  Another interesting observation (based on the definition of fractional Sobolev spaces in Gaia \cite[p.19-20]{gaia}) is the following: We could also directly define for $G \in \mathcal{S}_{od}(\R^2)$ that $G \in \dot{\tilde{H}}^{1/2}_{od}(\R^2) =: X$, if the following norm is finite:
	  $$\| G \|_{X} := \Big{\|} \frac{| G(x,y) |}{|x-y|^{1/2}} \Big{\|}_{L^{2}_{od}}$$
	  To ensure suitable properties of the entire space $X$, we use completion with respect to the norm. Then, we could also consider the dual space $X^{\ast}$ of this space $X$ instead of the previously defined $H^{-1/2}_{od}(\R^2)$. It can be seen for $U \in X^{\ast}$ (notice that multiplication by functions of at most polynomial growth is justified as for Schwartz functions):
	  $$\langle U, G \rangle = \langle | x-y |^{1/2} U, \frac{G}{| x-y |^{1/2}} \rangle,$$
	  which implies:
	  $$| x-y |^{1/2} U \in L^{2}_{od}(\R^2)$$
	  The converse holds trivially, so this is an alternative characterisation of $X^{\ast}$. Therefore, if $d_{1/2} u \in X^{\ast}$, then:
	  $$| x-y |^{1/2} d_{1/2} u \in L^{2}_{od}(\R^2)$$
	  A direct examination shows:
	  \begin{align}
	  \label{todnull}
	  	\langle | x-y |^{1/2} d_{1/2} u, G \rangle	&= \langle d_{1/2} u, | x-y |^{1/2} G \rangle \notag \\
										&= \langle u, \div_{1/2} \big{(} | x-y |^{1/2} G \big{)} \rangle \notag \\
										&= \langle u, \div_{0} G \rangle \notag \\
										&= \langle d_{0} u, G \rangle,
	  \end{align}
	  where we extend the definition of $d_{1/2}$ and $\div_{1/2}$ as expected to $s = 0$. Observe that all previously established properties continue to hold also for $s=0$, as we have already seen above. We may conclude:
	  $$d_{0} u \in L^{2}_{od}(\R^2)$$
	  Let us present some further ideas and observations related to this definition: We observe that by \eqref{todnull}, we may deduce:
	  	$$d_{0} u = H \in L^{2}_{od}(\R^2)$$
	  	Let us now assume that $u$ is smooth and of polynomial growth. The general case follows by mollification, as mollifying with a Schwartz kernel as in the case of an $L^1$-estimate leads to this scenario. Then, we have for compactly supported, smooth, off-diagonal $G$:
	  	\begin{align}
	  		\langle d_0 u, G \rangle	&= \langle u, \div_{0} G \rangle \notag \\
								&= \big{\langle} u, \int \frac{2 G(x,y)}{|x-y|} dy \big{\rangle} \notag \\
								&= \iint \frac{2 u(x) G(x,y)}{|x-y|} dx dy \notag \\
								&= \iint \frac{(u(x) - u(y)) G(x,y)}{| x-y |} dx dy, 
		\end{align}
	  	and therefore, by comparison and due to the compactly supported functions $G$ being dense in $L^2_{od}(\R^2)$:
	  	$${u(x) - u(y)} = d_0 u = H(x,y) \in L^2_{od}(\R^2)$$
	  	But this implies that:
	  	\begin{equation}
			\iint \frac{| u(x) - u(y) |^2}{| x-y |} dx dy < + \infty
		\end{equation}
		This train of thought reveals a flaw in the definition: The finiteness of the above integral is a unnecessarily strong assumption, as not all $f \in L^2(\R)$ do satisfy the boundedness of the integral. As an example, let us show that for $\chi_{]-1,1[}$, the corresponding integral is not finite, thus showing that $u$ could not be the characteristic function of $]-1,1[$ and hence restricting the set of possible $L^2$-functions, which should intuitively not be the case for the kind of space we are looking for:
		\begin{align*}
			\iint \frac{| u(x) - u(y) |^2}{| x-y |} dx dy	&= 2 \int_{\R \setminus ]-1,1[} \int_{-1}^{1} \frac{1}{| y - x |} dxdy \\
											&\geq 2 \int_{1}^{+\infty} \int_{-1}^{1} \frac{1}{y-x} dx dy \\
											&= 2 \int_{1}^{+\infty} \log \big{(} \frac{y + 1}{y - 1} \big{)} dy \\
											&= 4 \int_{1}^{+\infty} \frac{\log(z)}{(z-1)^2} dz = + \infty
		\end{align*}
		Observe that at one step we restricted to only one half of the integral for convenience as well as a change of variables $z = (y+1)/(y-1)$. We used that:
		$$\frac{dz}{dy} = - \frac{2}{(y-1)^2} = - \frac{2}{\big{(} \frac{z+1}{z-1} - 1 \big{)}^2} = - \frac{1}{2} (z-1)^2$$
		
		Thus, this kind of definition does not seem to be appropriate for our purposes.

	  \subsection{The Combined Estimate in Arbitrary Dimensions}
	  
	  The following result is an immediate consequence of the definition of the space $H^{-1/2}_{od}(\R^2)$ combined with the estimates proven in Theorem \ref{l1est} and \ref{h1/2est}:
	  
	  \begin{thm}
	  \label{combest}
	  	Let $u \in \mathcal{S}^{\prime}(\R)$ be given and assume that:
		$$d_{1/2} u \in L^{1}_{od} + H^{-1/2}_{od}(\R^2)$$
		Then $u - C_{u} \in L^2(\R)$ for some constant $C_{u} \in \R$ depending on $u$ together with the estimate:
		\begin{equation}
		\label{estthm3}
			\| u - C_{u} \|_{L^2} \leq C \| d_{1/2} u \|_{L^{1}_{od} + H^{-1/2}_{od}},
		\end{equation}
		where $C > 0$ is independent of $u$.
	  \end{thm}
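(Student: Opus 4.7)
The plan is to exploit the linearity of the fractional gradient together with the very definition of $H^{-1/2}_{od}(\R^2)$ as the image of $d_{1/2}$ on $L^2(\R)$, so that the combined estimate reduces in one step to the two results already established separately.

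First, I would fix an arbitrary decomposition $d_{1/2} u = F + V$ with $F \in L^{1}_{od}(\R^2)$ and $V \in H^{-1/2}_{od}(\R^2)$. By Definition \ref{d1/2image}, there exists $v \in L^{2}(\R)$ with $d_{1/2} v = V$ and $\| v \|_{L^2} = \| V \|_{H^{-1/2}_{od}}$. Using linearity of the distributional fractional gradient (immediate from the dual Definition \ref{definitionfracgradondist}), this gives
\begin{equation*}
d_{1/2}(u - v) = d_{1/2} u - d_{1/2} v = F \in L^{1}_{od}(\R^2).
\end{equation*}
Theorem \ref{l1est} then provides a constant $C_{u,v} \in \R$ such that $u - v - C_{u,v} \in L^2(\R)$ together with the bound $\| u - v - C_{u,v} \|_{L^2} \lesssim \| F \|_{L^{1}_{od}}$.

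From here I would set $C_u := C_{u,v}$ and write $u - C_u = v + (u - v - C_{u,v})$, which is a sum of two $L^2$-functions and hence lies in $L^2(\R)$. The triangle inequality yields
\begin{equation*}
\| u - C_u \|_{L^2} \leq \| v \|_{L^2} + \| u - v - C_{u,v} \|_{L^2} \lesssim \| V \|_{H^{-1/2}_{od}} + \| F \|_{L^{1}_{od}}.
\end{equation*}
Taking the infimum over all admissible decompositions $d_{1/2} u = F + V$ in $L^{1}_{od} + H^{-1/2}_{od}(\R^2)$ recovers precisely the sum-space norm on the right-hand side of \eqref{estthm3}, with an implicit constant independent of $u$.

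Two minor points need a brief sanity check. The constant $C_u$ may a priori depend on the chosen decomposition, but Corollary \ref{cor1} guarantees that the ambiguity in selecting $v$ with $d_{1/2} v = V$ is only an additive constant, which gets absorbed into $C_u$ and does not affect $\| u - C_u \|_{L^2}$; thus the construction is consistent. I do not anticipate a genuine obstacle here: the whole point of Definition \ref{d1/2image} is to make the $H^{-1/2}_{od}$-component contribute to the $L^2$-bound for free, so the theorem really is, as the author already remarks, an essentially immediate consequence of Theorems \ref{l1est} and \ref{h1/2est} combined with the linearity of $d_{1/2}$.
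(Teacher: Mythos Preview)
Your proposal is correct and follows essentially the same route as the paper: decompose $d_{1/2}u$, peel off the $H^{-1/2}_{od}$-part via some $v\in L^2$ with $d_{1/2}v=V$, apply Theorem~\ref{l1est} to $u-v$, and finish with the triangle inequality. The only cosmetic difference is that the paper fixes one near-optimal decomposition (paying a factor $2$) while you pass to the infimum; since any constant $C$ with $u-C\in L^2(\R)$ is in fact unique (a nonzero constant is not in $L^2(\R)$), the left-hand side is independent of the decomposition and your infimum step is legitimate, even though your sanity-check paragraph addresses a slightly different ambiguity.
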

	  
	  The proof consists of decomposing $d_{1/2} u$ into a $L^1$- and a $H^{-1/2}_{od}$-part. Then find $v \in L^2(\R)$, such that the $H^{-1/2}_{od}$-part of $d_{1/2} u$ is precisely $d_{1/2} v$. Existence is ensured by the very definition of the space. Consequently, we have reduced the problem to the $L^1$-estimate and we are done. The estimate in \eqref{estthm3} follows by optimisation over all decompositions of $d_{1/2} u$.
	  
	  \begin{proof}
	  	Assume that $u$ is a tempered distribution as in the Theorem \ref{combest}. Then we decompose:
		$$d_{1/2} u = U + V,$$
		where $U \in L^1_{od}(\R^2)$ and $V \in H^{-1/2}_{od}(\R^2)$. We recall that by Definition \ref{d1/2image}, then there exists a $v \in L^2(\R)$, such that:
		$$d_{1/2} v = V$$
		Thus, as $v \in L^2(\R) \subset \mathcal{S}^{\prime}(\R)$, we obviously have for $\tilde{u} := u - v$:
		\begin{equation}
			d_{1/2} \tilde{u} = U \in L^{1}_{od}(\R^2)
		\end{equation}
		As a result, by the estimate in Theorem \ref{l1est}, there exists a constant $C_{\tilde{u}} \in \R$, such that $\tilde{u} - C_{\tilde{u}} \in L^2(\R)$ and:
		$$\| \tilde{u} - C_{\tilde{u}} \|_{L^2} \lesssim \| d_{1/2} \tilde{u} \|_{L^{1}_{od}} = \| U \|_{L^1_{od}}$$
		By the choice of $v$, we also have by Theorem \ref{h1/2est} and Definition \ref{d1/2image}:
		$$\| v \|_{L^2} = \| V \|_{H^{-1/2}_{od}},$$
		which immediately implies by the triangle inequality that $u - C_{\tilde{u}} \in L^2(\R)$ and more precisely:
		\begin{equation}
			\| u - C_{\tilde{u}} \|_{L^{2}} \leq \| \tilde{u} - C_{\tilde{u}} \|_{L^2} + \| v \|_{L^{2}} \lesssim \| U \|_{L^1_{od}} + \| V \|_{H^{-1/2}_{od}}
		\end{equation}
		Since $U, V$ were arbitrary, we can choose them in such a way that:
		$$\| U \|_{L^1_{od}} + \| V \|_{H^{-1/2}_{od}} \leq 2 \| d_{1/2} u \|_{L^{1}_{od} + H^{-1/2}_{od}},$$
		or arbitrarily small in case $\| d_{1/2} u \|_{L^{1}_{od} + H^{-1/2}_{od}} = 0$, and therefore:
		\begin{equation}
			\| u - C_{\tilde{u}} \|_{L^{2}} \lesssim \| d_{1/2} u \|_{L^{1}_{od} + H^{-1/2}_{od}}
		\end{equation}
		This establishes the desired result.
	  \end{proof}

	Precisely the same proof establishes the following, if we just apply Theorem \ref{generall1est} instead of Theorem \ref{l1est}:
	
	 \begin{thm}
	  \label{generalcombest}
	  	Let $u \in \mathcal{S}^{\prime}(\R^n)$, $s \in (0,1)$ and assume that:
		$$d_{s} u \in L^{1}_{od} + W^{-s, \frac{n}{n-s}}_{od}(\R^{2n})$$
		Then $u - C_{u} \in L^{\frac{n}{n-s}}(\R^n)$ for some constant $C_{u} \in \R$ depending on $u$ together with the estimate:
		\begin{equation}
		\label{estthm3.2}
			\| u - C_{u} \|_{L^{\frac{n}{n-s}}} \leq C \| d_{s} u \|_{L^{1}_{od} + W^{-s, \frac{n}{n-s}}_{od}},
		\end{equation}
		where $C > 0$ is independent of $u$.
	  \end{thm}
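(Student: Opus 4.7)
My plan is to mirror the proof of Theorem \ref{combest} essentially verbatim, with the single substantive change being to invoke Theorem \ref{generall1est} in place of Theorem \ref{l1est}. The reason this works is that Theorem \ref{generall1est} with $p=1$ yields Sobolev exponent $q$ determined by $\frac{1}{q} = 1 - \frac{s}{n} = \frac{n-s}{n}$, so $q = \frac{n}{n-s}$, which matches exactly the exponent appearing in the target space $W^{-s,\frac{n}{n-s}}_{od}(\R^{2n})$. The condition $sp < n$ reduces to $s < n$, which is automatic for $s \in (0,1)$.

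Concretely, given $u \in \mathcal{S}'(\R^n)$ with $d_s u \in L^{1}_{od} + W^{-s, \frac{n}{n-s}}_{od}(\R^{2n})$, I would first fix an arbitrary decomposition $d_s u = U + V$ with $U \in L^{1}_{od}(\R^{2n})$ and $V \in W^{-s,\frac{n}{n-s}}_{od}(\R^{2n})$. By the definition of $W^{-s,\frac{n}{n-s}}_{od}$ as the image of $d_s$ acting on $L^{\frac{n}{n-s}}(\R^n)$, there exists $v \in L^{\frac{n}{n-s}}(\R^n) \subset \mathcal{S}'(\R^n)$ with $d_s v = V$ and $\|v\|_{L^{\frac{n}{n-s}}} = \|V\|_{W^{-s,\frac{n}{n-s}}_{od}}$. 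Setting $\tilde{u} := u - v$, the linearity of $d_s$ on tempered distributions gives $d_s \tilde{u} = U \in L^{1}_{od}(\R^{2n})$, and Theorem \ref{generall1est} then produces a constant $C_{\tilde{u}} \in \R$ with $\tilde{u} - C_{\tilde{u}} \in L^{\frac{n}{n-s}}(\R^n)$ and $\|\tilde{u} - C_{\tilde{u}}\|_{L^{\frac{n}{n-s}}} \lesssim \|U\|_{L^{1}_{od}}$.

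The conclusion then follows from the triangle inequality:
$$\|u - C_{\tilde{u}}\|_{L^{\frac{n}{n-s}}} \leq \|\tilde{u} - C_{\tilde{u}}\|_{L^{\frac{n}{n-s}}} + \|v\|_{L^{\frac{n}{n-s}}} \lesssim \|U\|_{L^{1}_{od}} + \|V\|_{W^{-s,\frac{n}{n-s}}_{od}}.$$
Since the decomposition was arbitrary, I would take the infimum over all admissible splittings — or, in the degenerate case where the sum-space norm vanishes, choose decompositions whose norm sum shrinks to zero, exactly as in the proof of Theorem \ref{combest} — to obtain \eqref{estthm3.2}.

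I do not expect any real obstacle: the entire argument is bookkeeping on top of Theorem \ref{generall1est} and the definition of $W^{-s,\frac{n}{n-s}}_{od}$. The only points that deserve a second glance are that the Sobolev conjugate produced by Theorem \ref{generall1est} matches the chosen negative-order space exactly (verified above), and that the multiplicative constant $C$ remains independent of $u$; the latter is automatic because it is inherited from the $n$- and $s$-dependent constant in Theorem \ref{generall1est}, while the passage from fixed decompositions to the sum-norm is a standard optimisation.
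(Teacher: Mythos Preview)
Your proposal is correct and matches the paper's approach exactly: the paper simply states that the same proof as for Theorem \ref{combest} works, replacing Theorem \ref{l1est} by Theorem \ref{generall1est}. Your verification that the Sobolev exponent $q = \frac{n}{n-s}$ produced by Theorem \ref{generall1est} with $p=1$ matches the exponent in $W^{-s,\frac{n}{n-s}}_{od}$ is precisely the compatibility check needed, and the rest is the same bookkeeping as in the one-dimensional case.
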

	 This combined estimate is somewhat unsatisfying, as the decomposition yields little new or interesting insight due to the nature of the spaces $W^{-s,p}_{od}(\R^{2n})$. It would therefore be interesting to investigate alternative spaces and definitions for $W^{-s,p}_{od}(\R^{2n})$ and see, how they lend themselves to comparable results.

	  \section{Fractional Sobolev embedding for $sp > n$}
	  \label{sobolevemb}
	  
	  For completeness' sake, let us briefly address the case $sp > n$. This case implies H\"older regularity by means of the fractional Sobolev inequality. The following result is an immediate consequence of Nezza-Palatucci-Valdinocci \cite[Theorem 8.1]{hitchhiker} as well as the scaling-invariance of the estimate, once another Poincar\'e-type inequality has been established:
	  
	  \begin{thm}
	  	Let $s \in (0,1), p \in [1, + \infty[$ such that $sp > n$ and $u \in \mathcal{S}^{\prime}(\R^n)$. If we have:
		$$d_{s} u \in L^{p}_{od}(\R^{2n}),$$
		then also $u \in C^{0,\alpha}(\R^{2n})$ for $\alpha = s - n/p$. The following estimate holds:
		$$[ u ]_{C^{0,\alpha}} \leq C \| d_s u \|_{L^{p}_{od}},$$
		for some $C = C(n,s,p) > 0$.
	  \end{thm}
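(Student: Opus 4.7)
The plan is to repeat the mollification scheme used for Theorem \ref{l1est} and Theorem \ref{generall1est}, and then invoke the Morrey embedding of Nezza-Palatucci-Valdinocci locally, before exploiting scaling invariance to upgrade the estimate to a global Hölder seminorm bound. First, I would fix a smoothing kernel $\varphi \in \mathcal{S}(\R^n)$ with $\varphi \geq 0$ and $\int \varphi = 1$, set $u_{\varepsilon} := \varphi_{\varepsilon} \ast u$, and use the lemma on smoothness of convolutions together with \eqref{fracgradconv} to conclude that each $u_{\varepsilon}$ is smooth of at most polynomial growth with $d_{s} u_{\varepsilon} = \varphi_{\varepsilon} \ast d_{s} u$. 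By \eqref{generall1young} these objects are uniformly controlled:
\begin{equation*}
  \| d_{s} u_{\varepsilon} \|_{L^{p}_{od}} \leq \| d_{s} u \|_{L^{p}_{od}}, \quad \forall \varepsilon > 0.
\end{equation*}

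Next, because $u_{\varepsilon}$ is smooth of polynomial growth, the same computation as in \eqref{reductionueps} (now with the exponent $s$ instead of $1/2$ and $p$-th powers) yields the pointwise identification $d_{s} u_{\varepsilon}(x,y) = (u_{\varepsilon}(x)-u_{\varepsilon}(y))/|x-y|^{s}$, and therefore the Gagliardo-Sobolev seminorm of $u_{\varepsilon}$ is uniformly bounded by $\| d_{s} u \|_{L^{p}_{od}}$. On any cube $Q \subset \R^n$, Theorem 8.1 in Nezza-Palatucci-Valdinocci \cite{hitchhiker} then gives $u_{\varepsilon} \in C^{0,\alpha}(Q)$ with
\begin{equation*}
  \| u_{\varepsilon} \|_{C^{0,\alpha}(Q)} \leq C(Q) \bigl( \| u_{\varepsilon} \|_{L^{p}(Q)} + [u_{\varepsilon}]_{W^{s,p}(Q)} \bigr).
\end{equation*}

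The main work, and the expected main obstacle, is to turn this into a scaling-invariant estimate of the form $[u_{\varepsilon}]_{C^{0,\alpha}(Q)} \leq C [u_{\varepsilon}]_{W^{s,p}(Q)}$ with $C$ independent of the cube. A direct computation with the substitution $\xi = rx$ shows that both sides scale as $r^{s-n/p}$ under $u \mapsto u(r\,\cdot)$, so it suffices to establish the estimate on the unit cube. There I would combine the NPV embedding above with the fractional Poincaré inequality proven in Theorem \ref{poincarefrac} to replace the $L^{p}$-term $\| u - \dashint_{Q} u \|_{L^{p}(Q)}$ by a multiple of $[u]_{W^{s,p}(Q)}$ (since the Hölder seminorm is insensitive to additive constants, we may freely subtract the average). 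Alternatively, a direct contradiction argument exactly parallel to the proof of Theorem \ref{poincarefrac} — assuming a sequence violating the seminorm inequality, exploiting weak/strong compactness in $W^{s,p}(Q)$ and the embedding into $C^{0,\alpha}(Q)$ to extract a limit whose Gagliardo seminorm vanishes, hence is constant, contradicting the normalization — delivers the same conclusion.

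Once the scaling-invariant local bound is in hand, letting the cube exhaust $\R^n$ produces the global estimate $[u_{\varepsilon}]_{C^{0,\alpha}(\R^n)} \leq C \| d_{s} u \|_{L^{p}_{od}}$ uniformly in $\varepsilon$. Since $u_{\varepsilon} \to u$ in $\mathcal{S}^{\prime}(\R^n)$, the uniform equicontinuity provided by the Hölder bound together with an Arzelà-Ascoli argument (applied to the normalized sequence $u_{\varepsilon} - u_{\varepsilon}(0)$ on compact sets, which suffices because the distributional limit of $u_\varepsilon - c_\varepsilon$ can only differ from $u$ by a constant) identifies the distributional limit with a Hölder continuous function satisfying the stated estimate. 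The main technical hurdle will be verifying that the contradiction/Poincaré argument for the scaling-invariant Morrey inequality carries through for every $s \in (0,1)$ and $p \in [1,+\infty[$ with $sp > n$, in particular at $p = 1$, where some care is needed with the reflexivity and compactness statements used in the proof of Theorem \ref{poincarefrac}.
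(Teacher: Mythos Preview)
Your proposal is correct and follows essentially the same route as the paper: mollify to reduce to smooth functions of polynomial growth, establish the scaling-invariant seminorm estimate $[u]_{C^{0,\alpha}(Q)} \leq C [u]_{W^{s,p}(Q)}$ on cubes via a Poincar\'e-type contradiction argument (your second alternative is exactly what the paper does; note that Theorem \ref{poincarefrac} as stated is for $sp<n$, so your first alternative needs a separate Poincar\'e inequality in the Morrey range, which itself comes from the same contradiction scheme), exhaust $\R^n$, and pass to the limit via Arzel\`a--Ascoli on the normalized sequence $u_\varepsilon - u_\varepsilon(0)$.
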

	  It should be emphasised that the case $p = + \infty$ follows directly from the definition and is merely excluded, as we did not define $L^{\infty}_{od}(\R^{2n})$ explicitely. Naturally, one would define the corresponding norm by:
	  $$\| F \|_{L^{\infty}_{od}} := \sup_{(x,y) \in \R^{2n}} {| F(x,y) |},$$
	  where the supremum here is to be taken as the essential supremum (as per usual).
	  Then, using the pointwise estimates applied to $d_{s} u$ after regularising appropriately gives the corresponding H\"older regularity expected in this case.
	  
	  \begin{proof}
	  	As in the proof of the fractional Sobolev embedding in the case $sp < n$, we may assume by approximation that $u$ is smooth and of polynomial growth. Indeed, we can approximate $u$ by such a sequence $u_k$ and in a way that for all $k \in \mathbb{N}$:
		$$\| d_s u_k \|_{L^{p}_{od}} \leq 2 \| d_s u \|_{L^{p}_{od}},$$
		by mollification. Then we see, provided the result above holds for regular $u$:
		$$[ u_k ]_{C^{0,\alpha}} \lesssim 2 \| d_s u \|_{L^{p}_{od}},$$
		therefore the sequence $(u_k)$ is equicontinuous. By changing $u_k$ by a constant (depending on $k \in \mathbb{N}$), we may assume $u_k(0) = 0$, for all $k$. Therefore, by Arzela-Ascoli, there is a subsequence which converges uniformly to a continuous limit $w$ on compact sets. Therefore, the $u_k - u_k(0)$ also converge in the distributional sense to $w$. Thus, as the sequence $(u_k)$ converges distributionally to $u$ by its very definition, this shows that the $u_k(0)$ must be convergent as well and let us call the limit $\tilde{C}$. Then, we conclude by the distributional convergence:
		$$w = u - \tilde{C}$$
		We may deduce the desired estimate by using uniform convergence on compact subsets and investigating the $C^{0,\alpha}$-seminorm directly. Observe that we can omit $\tilde{C}$, as the seminorm ignores constants.\\
		
		It remains to establish the result for smooth $u$ of polynomial growth. This is a consequence of the following estimate for all such $u$:
		\begin{equation}
		\label{estholder}
			[ u ]_{C^{0,\alpha}(Q)} \leq C \cdot [ u ]_{W^{s,p}(Q)},
		\end{equation}
		for any cube $Q$ with a $C$ only depending on $n, s, p$. This follows by reduction to the unit cube due to scaling-invariance and the usual Poincar\'e argument: Let $u_k$ be such that $\| u_k \|_{W^{s,p}(Q)} = 1$ for all $k \in \mathbb{N}$ and assume:
		$$[ u_k ]_{C^{0,\alpha}(Q)} \geq k \cdot [ u_k ]_{W^{s,p}(Q)}$$
		Moreover, we may assume $\dashint_{Q} u_k dx = 0$. By Nezza-Palatucci-Valdinocci \cite[Theorem 8.2]{hitchhiker}, we know that $[ u_k ]_{C^{0,\alpha}(Q)}$ is bounded independent of $k$ and thus:
		$$[ u_k ]_{W^{s,p}(Q)} \to 0$$
		By the uniform bound on $\| u_k \|_{C^{0,\alpha}(Q)}$ following Nezza-Palatucci-Valdinocci \cite{hitchhiker}, we may extract a subsequence converging uniformly to some $u \in C^{0, \alpha}(Q)$. Therefore, the convergence also holds in the distributional sense. Additionally, the convergence also holds in $L^p$ by the compactness of the embedding by Nezza-Palatucci-Valdinocci \cite{hitchhiker} and as a result:
		$$\dashint_{Q} u dx = 0$$
		If we argue as in the fractional Sobolev embedding for $sp < n$, we may deduce that $d_s u = 0$ in $Q \times Q$, showing that $u$ is constant. By the average condition, $u = 0$, but this contradicts the $L^p$-convergence, so the desired estimate is established by contradiction.\\
		
		The proof is now straightforward, as the bounds in \eqref{estholder} on $Q$ are independent of the size of the cube, if we estimate $[ u ]_{W^{s,p}(Q)}$ by $\| d_s u \|_{L^{p}_{od}}$, therefore translating immediately to H\"older continuity of $u$, as desired.
	  \end{proof}

\newpage

\end{document}